\documentclass[a4paper,leqno]{article}

\usepackage[T1]{fontenc}
\usepackage[utf8]{inputenc}
\usepackage{mathtools,amssymb,amsthm}
\usepackage{lmodern}
\usepackage{hyperref}
\usepackage{microtype}
\usepackage{color}
\usepackage{enumerate}

\theoremstyle{plain}
\newtheorem{thm}{Theorem}[subsection]
\newtheorem{lem}[thm]{Lemma}
\newtheorem{coro}[thm]{Corollary}
\newtheorem{prop}[thm]{Proposition}

\newtheorem{thmalph}{Theorem}
\newtheorem{coralph}[thmalph]{Corollary}

\theoremstyle{definition}
\newtheorem{rmrk}[thm]{Remark}
\theoremstyle{remark}

\newcommand{\R}{\mathbb{R}}
\newcommand{\Z}{\mathbb{Z}}

\renewcommand{\H}{\mathbb{H}}
\newcommand{\C}{\mathbb{C}}

\newcommand*{\rom}[1]{\romannumeral}

\providecommand{\msc}[1]{{\noindent\small\textbf{Mathematics Subject Classification (2020)} --- #1.}}
\providecommand{\keywords}[1]{{\noindent\small\textbf{Keywords} --- #1.}}

\DeclareMathOperator{\SL}{SL}
\DeclareMathOperator{\PSL}{PSL}
\DeclareMathOperator{\PSO}{PSO}
\DeclareMathOperator{\GL}{GL}

\DeclareMathOperator{\RE}{Re}

\DeclareMathOperator{\tr}{tr}
\DeclareMathOperator{\End}{End}
\DeclareMathOperator{\Vol}{Vol}
\DeclareMathOperator{\rank}{rank}
\DeclareMathOperator{\Id}{Id}
\DeclareMathOperator{\Ad}{Ad}
\DeclareMathOperator{\ad}{ad}

\DeclareMathOperator{\spec}{spec}

\DeclareMathOperator{\Res}{Res}
\DeclareMathOperator{\prim}{prime}

\DeclareMathOperator{\RHS}{RHS}

\numberwithin{equation}{section}

\renewcommand{\thethm}{%
	\ifnum\value{subsection}=0
	\thesection
	\else
	\thesubsection
	\fi
	.\arabic{thm}%
}

\providecommand{\contact}{{
		\bigskip
		\small
		
		\noindent
		\textsc{J.~Frahm}:
		Department of Mathematics, Ny Munkegade 118,  8000, Aarhus C,
		Denmark
		\par\noindent\nopagebreak
		\textit{E-mail address:} \href{mailto:frahm@math.au.dk}
		{\texttt{frahm@math.au.dk}}\\
		
		\noindent
		\textsc{P.~Spilioti}:
		Department of Mathematics, Ny Munkegade 118,  8000, Aarhus C,
		Denmark
		\par\noindent\nopagebreak
		\textit{E-mail address:} \href{mailto:spilioti@math.au.dk}
		{\texttt{spilioti@math.au.dk}}	
}}

\title{Twisted Ruelle zeta function at zero for compact hyperbolic surfaces}
\author{Jan Frahm, Polyxeni Spilioti}

\begin{document}
	
	\maketitle
	
	\begin{abstract}
		Let $X$ be an orientable compact connected hyperbolic surface of genus $g$. In this paper, we prove that the twisted Selberg and Ruelle zeta functions, associated with an arbitrary finite-dimensional complex representation $\chi$ of $\pi_1(X)$ admit a meromorphic continuation to $\C$. Moreover, we study the behaviour of the twisted Ruelle zeta function at $s=0$ and prove that at this point it has a zero of order $\dim(\chi)(2g-2)$.
	\end{abstract}
	
	\bigskip
	\keywords{twisted Selberg zeta function, twisted Ruelle zeta function, non-unitary representations, 
		Selberg trace formula}
	\newline
	\msc{Primary: 11M36; Secondary: 11F72, 37C30}

	\section{Introduction}
	
	The zeta functions of Selberg and Ruelle were first introduced in \cite{Rue76,sel}. For a hyperbolic surface, they are defined by an Euler-type product over the prime closed geodesics, i.e., the closed geodesics which are not multiples of shorter geodesics, and in terms of the lengths of these geodesics. 
	Twisting these Euler products by a representation of the fundamental group of the surface defines twisted versions of the Selberg and Ruelle zeta functions. The goal of this paper is to prove that these twisted zeta functions, associated with a non-unitary representation of the fundamental group, admit a meromorphic continuation to $\C$. In addition, we study the behaviour of the twisted Ruelle zeta function near the origin and prove that it is related to a topological invariant, the Euler characteristic of the surface.
	
	Let $\Gamma$ be a discrete, torsion-free, cocompact subgroup of $\PSL_{2}(\R)$ and let $X=\Gamma \backslash\H^{2}$ be the associated compact hyperbolic surface with fundamental group $\pi_1(X)\cong\Gamma$. We define the \textit{twisted} dynamical zeta functions, associated with a finite-dimensional, complex representation $\chi\colon \Gamma\rightarrow \GL(V_{\chi})$
	of $\Gamma$, which is not necessary unitary. For $s\in\C$, the twisted Selberg zeta function, associated with $\chi$, is given by the infinite product
	\begin{equation*}
		Z(s;\chi):=\prod_{\substack{[\gamma]\neq{e}\\ [\gamma]\prim}} \prod_{k=0}^{\infty}\det
		\big(\Id-\chi(\gamma)e^{-(s+k) l(\gamma)}\big),
	\end{equation*}
	and the twisted Ruelle zeta function, associated with $\chi$, is defined by the infinite product
	\begin{equation*}
		R(s;\chi):=\prod_{\substack{[\gamma]\neq{e}\\ [\gamma]\prim}}\det (\Id-\chi(\gamma)e^{-sl(\gamma)}).
	\end{equation*}
	Here, the products run over the \emph{primitive} conjugacy classes of $\Gamma$ (see Subsection~\ref{sec:HyperbolicSurfaces}), which correspond to the prime closed geodesics on the surface, and $l(\gamma)$ denotes the length of geodesic corresponding to $[\gamma]$.
	The products in the definition of the \textit{twisted} zeta functions converge absolutely and uniformly on compact subsets of some right half plane in $\C$ by \cite[Theorem 3.1]{fedosova2020meromorphic} (see also Propositions~\ref{SelbergConvergence} and \ref{prop:RuelleConvergence}). If we consider $\chi$ to be the trivial representation, we obtain the usual, \textit{non-twisted} definitions of the Selberg and Ruelle zeta functions (see \cite{Rue76,sel}).
	
	We summarize here the main results of the paper:
	
	\begin{thmalph}[see Theorems~\ref{selbergmero} and \ref{selbergfe}]\label{thm:IntroThm}
		Let $X=\Gamma\backslash \H^2$ be a compact hyperbolic surface and let $\chi\colon \Gamma\rightarrow \GL(V_{\chi})$ be a finite-dimensional, complex representation of $\Gamma$. Then, the twisted Selberg zeta function $Z(s;\chi)$ admits a meromorphic continuation to $\C$ which satisfies the functional equation \eqref{fefinal}.
	\end{thmalph}
	
	Using the fundamental relation $R(s;\chi)=Z(s;\chi)/Z(s+1;\chi)$ between the Selberg and the Ruelle zeta function, we immediately obtain:
	
	\begin{coralph}[see Corollary~\ref{cor:RuelleMeromorphic}]\label{cor:IntroRuelleMero}
		The twisted Ruelle zeta function $R(s;\chi)$ admits a meromorphic continuation to $\C$ which satisfies the functional equation
		$$ R(s;\chi)R(-s;\chi)=(2\sin\pi s)^{2(2g-2)\dim V_\chi}. $$
	\end{coralph}
	
	The functional equation for the Ruelle zeta function immediately implies:
	
	\begin{coralph}[see Corollary~\ref{cor:RuelleAtZero}]\label{cor:IntroRuelleZero}
		The behavior of the twisted Ruelle zeta function $R(s;\chi)$ near $s=0$	is given by
		\begin{equation*}
			R(s;\chi)=\pm(2\pi s)^{\dim(V_{\chi})(2g-2)}+\text{higher order terms}.
		\end{equation*}
	\end{coralph}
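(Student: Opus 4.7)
The plan is to combine the classical product formula between $Z$ and $R$ with the functional equation for the twisted Selberg zeta function. On the half-plane of absolute convergence, grouping the factors in the Euler product for $Z(s;\chi)$ pairwise in the index $k$ gives
\begin{equation*}
R(s;\chi)=\frac{Z(s;\chi)}{Z(s+1;\chi)},
\end{equation*}
an identity that extends to all of $\C$ by meromorphic continuation (Theorem A and Corollary B). Theorem A moreover tells us that $s=1$ can lie in the zero set of $Z(\cdot;\chi)$ only via its spectral part, i.e.\ it is a zero of order $m_0$ equal to the multiplicity of $0$ as an eigenvalue of $\Delta_\chi^\sharp$ (with $m_0=0$ permitted).

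Next I would apply the twisted functional equation, assumed to take the form
\begin{equation*}
Z(s;\chi)=F_g(s;\chi)\cdot Z(1-s;\chi),
\end{equation*}
where $F_g(s;\chi)$ is an explicit meromorphic factor depending only on $s$, the genus $g$, and $\dim V_\chi$. It originates from the identity orbital integral in M\"uller's non-unitary Selberg trace formula, which scales by $\dim V_\chi$ relative to the trivial representation and picks up a Gauss--Bonnet factor $\Vol(X)=2\pi(2g-2)$ from the volume of $X$. Inserting this into the formula for $R$ yields
\begin{equation*}
R(s;\chi)=F_g(s;\chi)\cdot\frac{Z(1-s;\chi)}{Z(s+1;\chi)}.
\end{equation*}

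Expanding the numerator and denominator of the second factor around $s=0$ shows that both vanish to exactly order $m_0$, with leading coefficients differing by a factor of $(-1)^{m_0}$; in particular the ratio tends to the nonzero constant $(-1)^{m_0}$. Hence the leading behavior of $R(s;\chi)$ at $s=0$ is determined entirely by $F_g(s;\chi)$. A direct Laurent expansion at $s=0$ of the explicit expression for $F_g(s;\chi)$, which is essentially a Barnes double gamma function raised to the power $\dim(V_\chi)(2g-2)$, then gives
\begin{equation*}
F_g(s;\chi)=\pm(2\pi s)^{\dim(V_\chi)(2g-2)}+\text{higher order terms},
\end{equation*}
as desired.

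The main technical obstacle is the derivation of the functional equation and the precise identification of $F_g(s;\chi)$. In the classical unitary case this relies on self-adjointness of the Laplacian together with standard spectral theory, but in the non-unitary setting $\Delta_\chi^\sharp$ is no longer self-adjoint, so one must work with M\"uller's non-unitary trace formula and carefully separate the contributions of the identity from those of the nontrivial conjugacy classes. Once $F_g(s;\chi)$ is known, the cancellation of the spectral $Z(1;\chi)$-factors in the ratio and the extraction of the leading power of $(2\pi s)$ from the double gamma asymptotics are routine.
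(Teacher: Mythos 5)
Your proposal is correct and follows essentially the same route as the paper: write $R(s;\chi)=Z(s;\chi)/Z(s+1;\chi)$, use the functional equation to convert one of the $Z$-factors so that the surviving ratio of $Z$'s tends to $\pm 1$ near $s=0$, and read off the leading power from the explicit functional-equation factor. The only cosmetic difference is that you apply the functional equation to the numerator $Z(s;\chi)=\eta(s;\chi)Z(1-s;\chi)$, obtaining $R=\eta(s;\chi)\,Z(1-s;\chi)/Z(1+s;\chi)$, whereas the paper applies it to the denominator, obtaining $R=Z(s;\chi)/\big(\eta(s+1;\chi)Z(-s;\chi)\big)$; by the evenness of $r\tan\pi r$ one has $\eta(s;\chi)=\eta(1-s;\chi)^{-1}$, so the two expressions agree and your cancellation argument via the order $m_0$ of the zero of $Z$ at $s=1$ is a fine substitute for the paper's $Z(s;\chi)/Z(-s;\chi)\sim\pm1$. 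I would flag, though, that the step you dismiss as ``routine''---extracting $\pm(2\pi s)^{\dim(V_\chi)(2g-2)}$ from $\eta$---is really the analytic heart of the corollary: here $\eta(s;\chi)=\exp\big[\dim(V_\chi)\Vol(X)\int_0^{s-1/2}r\tan\pi r\,dr\big]$, whose integrand has a simple pole at $r=-\tfrac{1}{2}$, and one must expand $\int_0^{1/2-\epsilon}r\tan\pi r\,dr$ as $\epsilon\to 0^+$ (as in Fried) to see that $\exp\big[-2\pi\int_0^{1/2-\epsilon}r\tan\pi r\,dr\big]\sim 2\pi\epsilon$; the Gauss--Bonnet identity $\Vol(X)/2\pi=2g-2$ then converts the exponent. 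Your allusion to the Barnes double gamma function is a reasonable heuristic, but the paper avoids it and does the integral asymptotics directly, which is cleaner in this twisted, non-self-adjoint setting.
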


	The sign in Corollary~\ref{cor:IntroRuelleZero} is related to the multiplicity of the zero eigenvalue of a twisted Laplacian on $X$, see Remark~\ref{rem:SignInRuelleAtZero}.

	\paragraph{Relation to previous results.}
	
	In \cite[Theorem 1 and Corollary 2]{fried1986fuchsian}, Fried showed Corollary~\ref{cor:IntroRuelleMero} and \ref{cor:IntroRuelleZero} in the special case where the representation $\chi$ of $\pi_{1}(X)$ is \emph{unitary}, building upon the Selberg trace formula developed by Hejhal~\cite{Hej76}. The purpose of this paper is to obtain an explicit resolvent trace formula and use it to generalize Fried's results to the case of \emph{non-unitary} representations of the fundamental group. Let us remark that Dyatlov and Zworski in \cite[Theorem p. 212]{dyatlov2017ruelle} generalized Fried's results to the case of (non-twisted) Ruelle zeta functions for oriented surfaces with (not necessarily constant) negative curvature, using techniques from semiclassical analysis. Concerning $2$-dimensional manifolds, we mention here also the recent paper of Riviere and Dang~\cite{dang2020poincar}, where an object closely related to the dynamical zeta functions, the generalized Poincar\'{e} series is defined and its value at $s=0$ studied.

	For higher-dimensional compact hyperbolic manifolds, Fried computed in \cite[Theorem 3]{Fried} the order of vanishing of the Ruelle zeta function twisted by a unitary representation in terms of Betti numbers (see also the recent work \cite{CDKP20} for perturbations of the hyperbolic metric on $3$-manifolds). In the odd-dimensional case it turns out that the leading coefficient is in fact related to a topological invariant, the Reidemeister torsion (\cite{DR,franz1935torsion,reidemeister1935homotopieringe}). This led Fried to conjecture a precise relation between the value $R(0;\chi)$ and the Reidemeister torsion for all compact locally symmetric spaces and acyclic representations $\chi$. For unitary representations $\chi$, Fried's conjecture was recently proven by Shen~\cite{Shen2018} (see also \cite{BO,BO99,CD19,dang2019fried,MS91,MSF,mueller2020fried,shen2020complex,shen2021analytic,Spilioti2018,spilioti2020functional,spilioti2020twisted,Wo} for other contributions to Fried's conjecture).

	In fact, also for quotients $X=\Gamma\backslash\H^2$ of the upper half plane, Fried obtained in \cite{fried1986fuchsian} a relation between the value of a Ruelle zeta function at $s=0$ and a topological invariant. He considered a general Fuchsian group $\Gamma$, allowing also elliptic elements. In such a case, $X$ is a compact orbisurface and the unit sphere bundle $S(X)=\Gamma\backslash\PSL_{2}(\R)$ is a Seifert fiber space over $X$. Fried considered the twisted Ruelle zeta function $R(s;\rho)$, associated with a unitary representation $\rho$ of $\pi_{1}(S(X))$. Note that $\pi_{1}(X)\simeq \pi_{1}(S(X))/ \Z$. By \cite[Theorem 3]{fried1986fuchsian}, for an acyclic, unitary representation $\rho$ of $\pi_{1}(S(X))$, the value $\vert R(0;\rho)\vert^{-1}$ equals the Reidemeister torsion of $S(X)$ in the representation $\rho$. It is natural to ask whether one can extend these results to the case of a non-unitary representation of $\pi_{1}(S(X))$. We deal with this problem the subsequent work~\cite{BFS21}.
	
	\paragraph{Outline of the proof.}

	The main tool to prove the above results is a twisted Selberg trace formula for non-unitary representations of $\Gamma$ (see Theorem~\ref{thm:TraceFormula}). This trace formula holds for the heat operators of a twisted Laplace operator $\Delta_{\chi}^{\sharp}$ on the flat vector bundle $E_\chi$ over $X$, associated with $\chi$. This twisted Laplacian is not self-adjoint in general, but it is elliptic with self-adjoint principle symbol and hence it has similar spectral properties (for further details see Subsections~\ref{sec:TwistedBLOperator} and \ref{sec:TraceFormula}). A slightly weaker version of this trace formula, namely for Paley--Wiener functions applied to $(\Delta_{\chi}^{\sharp})^{1/2}$ instead of heat operators, was previously obtained by M\"{u}ller~\cite[Theorem 1.1]{M1} in the more general context of rank one locally symmetric spaces. Our trace formula can, after some additional effort estimating the heat kernel, be used to obtain a resolvent trace formula (see Proposition~\ref{resolventtrace}) which is the key to prove Theorem~\ref{thm:IntroThm}. We note that the trace formula and the functional equation for the twisted Selberg zeta function that we obtain generalize the corresponding results in Hejhal~\cite[Chapter III, Theorem 4.10]{Hej76}.
	
	Finally, Corollary~\ref{cor:IntroRuelleZero} on the behavior of the twisted Ruelle zeta function follows from a detailed analysis of the functional equation for the twisted Selberg zeta function. Since the functional equation is the same as in the case of a unitary representation of $\Gamma$, the same computations as in \cite{fried1986fuchsian} apply.	
	
	\paragraph{Organization of the paper.}
	
	In Section~\ref{sec:Preliminaries}, we review well-known theory
	of the geometry of hyperbolic surfaces, the Laplace--Beltrami operator, and
	the principal series representation of $\PSL_{2}(\R)$.
	In Section~\ref{sec:HarmonicAnalysis}, we introduce the twisted Bochner--Laplace operator acting on the space of sections of a vector bundle over $X$ and recall its spectral properties.
	In the same section, we derive the resolvent trace formula for the heat operator induced by the twisted Laplacian.
	Section~\ref{sec:ZetaFunctions} is the core of this article.
	In this section, we define the Selberg and Ruelle zeta function associated 
	with a non-unitary representation of the fundamental group of $X$ and prove that
	they admit a meromorphic continuation to $\C$.  Moreover, we prove functional
	equations for the twisted Selberg function, which further  
	lead to the result for the behavior of the twisted Ruelle zeta function at the origin.
	
	\paragraph{Acknowledgements.}
	
	The authors would like to thank L\'{e}o B\'{e}nard and Werner M\"{u}ller
	for helpful discussions and comments about the results of Fried in \cite{fried1986fuchsian}. We are also grateful to the referee for various helpful comments that helped to improve the paper, one of them making the functional equation of the twisted Ruelle zeta function more explicit.
	Both authors were supported by a research grant from the Villum Foundation (Grant No. 00025373).

	\section{Preliminaries}\label{sec:Preliminaries}
	
	In this section we set up some notation around the geometry and analysis on hyperbolic surfaces and representation theory of $\PSL(2,\R)$.
	
	\subsection{Hyperbolic surfaces}\label{sec:HyperbolicSurfaces}
	We consider the upper half plane 
	\[
	\H^2:=\{z=x+iy:y>0\}.
	\]
	The group $G=\PSL(2,\R)$ acts on $\H^2$ by fractional linear transformations. This action is transitive and the maximal compact subgroup $K=\PSO(2)$ is the stabilizer of $i\in\H^2$, hence $\H^2\cong G/K=\PSL(2,\R)/\PSO(2)$.
	Let  $\mathfrak{g}$ and $\mathfrak{k}$ denote the Lie algebras of $G$ and $K$. The $G$-invariant metric, induced by the restriction of the Killing form on $\mathfrak{g}$ to the Cartan complement $\mathfrak{p}$ of $\mathfrak{k}$, is the Poincar\'{e} metric
	\begin{equation}\label{poinc}
		ds^2=\frac{dx^2+dy^2}{y^2}.
	\end{equation}
	Let $G=NAK$ be the Iwasawa decomposition of $G$ with
	\begin{equation*}
		N=\bigg\{
		\begin{pmatrix}
			1 & x\\
			0 & 1
		\end{pmatrix}: x\in \R \bigg\} \qquad \mbox{and} \qquad
		A=\bigg\{
		\begin{pmatrix}
			y^{1/2} & 0\\
			0 & y^{-1/2}
		\end{pmatrix}: y>0 \bigg\},
	\end{equation*}
	where matrices are identified with their images in the quotient group $\PSL(2,\R)=\SL(2,\R)/\{\pm I\}$. Let $A^+\subseteq A$ be the subset of all elements of the above form with $y>1$. We write $\mathfrak{n}$ and $\mathfrak{a}$ for the Lie algebras of $N$ and $A$.
	
	Now, let $\Gamma$ be a Fuchsian group, that is a discrete subgroup of $G$. We assume that $\Gamma$ is cocompact, i.e., $\Gamma\backslash G$ is compact, and that $\Gamma$ is torsion-free, i.e., there are no non-trivial elements of finite order. Then, $X=\Gamma\backslash \H^2=\Gamma\backslash G/K$	is an orientable compact connected hyperbolic surface, and conversely every orientable compact connected hyperbolic surface arises in this way.
	
	In this setting, every $\gamma\in\Gamma$ with $\gamma\neq e$, where $e$	is the identity element, is hyperbolic and hence conjugate to a unique element $a_\gamma\in A^+$. It follows that the non-trivial conjugacy classes $[\gamma]$ in $\Gamma$ parameterize the set of closed geodesics on $X$. Here a geodesic is called \emph{closed} if it returns to its starting point with the same direction. The length $l(\gamma)$ of the closed geodesic corresponding to a non-trivial conjugacy class $[\gamma]$ is related to the element $a_\gamma$ by $\Ad(a_\gamma)|_{\mathfrak{n}}=e^{l(\gamma)}\Id_{\mathfrak{n}}$. A non-trivial conjugacy class $[\gamma]$ in $\Gamma$ is called \emph{primitive} if $\gamma$ cannot be written as $\gamma=\gamma_0^n$, with $\gamma_0\in\Gamma$ and $n\geq2$. Then $[\gamma]$ is primitive if and only if the corresponding geodesic is \emph{prime}, i.e., it cannot be written as the multiple of a shorter closed geodesic. For every non-trivial conjugacy class $[\gamma]$ we can write $\gamma=\gamma_0^n$ with $[\gamma_0]$ primitive. The number $n=n_\Gamma(\gamma)$ is unique, independent of the representative $\gamma$ and satisfies $l(\gamma)=n\cdot l(\gamma_0)$.

	\subsection{The Laplace--Beltrami operator}
	
	Let
	\begin{equation}\label{Lb2}
		\widetilde{\Delta}=-y^{2}\bigg(\frac{\partial^{2}}{\partial x}
		+ \frac{\partial^{2}}{\partial y}\bigg),
	\end{equation}
	denote the Laplace--Beltrami operator on $\H^2$, where we write $z=x+iy\in \H^{2}$. If we define the left translation of a smooth function $f$ on $\H^{2}$ by $g\in G$ as
	\[
	L_{g}f(z):=f(g^{-1}z),
	\]
	then, for every $g\in G$
	\[
	L_{g}\circ\widetilde{\Delta}=\widetilde{\Delta}\circ L_{g}.
	\]
	Hence, $\widetilde{\Delta}$ descends to a differential operator $\Delta$ on $\Gamma\backslash\H^{2}$. 

	\subsection{Unitary principal series representations}

	Let $P=AN$ denote the standard parabolic subgroup of $G$. We identify $\mathfrak{a}_\C^*\simeq\C$ by $\lambda\mapsto\lambda(\operatorname{diag}(\frac{1}{2},-\frac{1}{2}))$. Then $\rho=\frac{1}{2}\tr\ad|_{\mathfrak{n}}$ corresponds to $\frac{1}{2}$. For $\lambda\in\R$ we form the character $e^{i\lambda}$ of $A$ given by $e^{i\lambda}(e^H)=e^{i\lambda(H)}$ ($H\in\mathfrak{a}$), and extend it to a character $e^{i\lambda}\otimes1$ of $P=AN$ by letting $N$ act trivially. We form the induced representation
	$$ \pi_\lambda = \operatorname{Ind}_P^G(e^{i\lambda}\otimes 1) $$
	of $G$ on the Hilbert space $\mathcal{H}_\lambda$ of $L^2$-sections of the homogeneous vector bundle $G\times_P(e^{i\lambda+\rho}\otimes1)\to G/P$. The representations $\pi_\lambda$ are unitary and irreducible for all $\lambda\in\R$ and are called the \emph{unitary principal series}. We write $\Theta_\lambda$ for the \textit{distribution character} of $\pi_\lambda$ defined by
	$$ \Theta_\lambda(\varphi) = \tr\int_G\varphi(g)\pi_\lambda(g)\,dg \qquad (\varphi\in C_c^\infty(G)), $$
	where $dg$ denotes a suitably normalized Haar measure on $G$.
	
We denote by $\mathcal{H}_\lambda^{K}$ the one-dimensional subspace of $\mathcal{H}_\lambda$ consisting of $K$-fixed vectors.

	\section{Harmonic analysis on hyperbolic surfaces}\label{sec:HarmonicAnalysis}
	
	Following M\"{u}ller~\cite{M1}, we describe the construction of the twisted Bochner--Laplace operator on vector bundles over $X=\Gamma\backslash\H^2$ induced from non-unitary representations $\chi$ of $\Gamma$ and use it to obtain a trace formula for $X$.
	
	\subsection{The twisted Bochner--Laplace operator}\label{sec:TwistedBLOperator}
	
	In this section, we define the twisted Bochner--Laplace operator as introduced  in \cite[Section 4]{M1}. This operator acts on sections of a vector bundle over $X$. It is an elliptic operator which is in general not self-adjoint. Nevertheless, it has a self-adjoint principle symbol (with respect to the choice of a metric) and hence it has qualitatively similar spectral properties.
	
Let $\chi$ be a finite-dimensional, complex representation
	\begin{equation*}
		\chi\colon\Gamma\rightarrow \GL(V_{\chi})
	\end{equation*}
	of $\Gamma$.
 Denote by $E_{\chi}=V_\chi\times_\Gamma\H^2\rightarrow X$ the associated flat vector bundle 
 over $X$, equipped with a flat connection $\nabla^{E_{\chi}}$.
We recall the construction of the twisted Bochner--Laplace operator $\Delta_\chi^\sharp$, acting on smooth sections of $E_{\chi}$.

The second covariant derivative $(\nabla^{E_\chi})^2$ is defined by
	$$
	(\nabla^{E_\chi})^2_{V,W}:=\nabla_{V}^{E_\chi}\nabla_{W}^{E_\chi}-\nabla^{E_\chi}_{\nabla^{LC}_{V}W}, 
$$
where $V,W\in C^{\infty}(X,TX)$, $TX$ is the tangent bundle of $X$,
and $\nabla^{LC}$ denotes the Levi--Civita connection on $TX$. 
The twisted Bochner--Laplace operator $\Delta_\chi^\sharp$ is defined to be the corresponding connection Laplacian on $E_\chi$, i.e. the negative of the trace of the second covariant derivative:
	\begin{equation}\label{eq:DefDeltaSharp}
	\Delta_\chi^\sharp := -\tr\big((\nabla^{E_\chi})^2\big).
	\end{equation}

\begin{rmrk}
The definition of the twisted Bochner--Laplace operator in the present paper is the same as in 
\cite[Section 4]{M1} for $\tau$ being the trivial representation.
\end{rmrk}

Locally, this operator is described as follows.
We consider an open subset $U$ of $X$ such that $E_{\chi}\lvert_{U}$ is trivial, 
i.e.,  $E_\chi\lvert_U\cong U\times\C^m$, where $m=\rank(E_\chi)=\dim V_{\chi}$.
Let $e_1,\ldots,e_m$ be any basis of flat sections of $E_{\chi}\lvert_{U}$. Then, each 
$\phi \in C^{\infty}(U,E_{\chi}\lvert_{U})$ can be written as
	$
	\phi=\sum_{i=1}^{m}\phi_{i} \otimes e_{i},
	$
	where $\phi_{i}\in C^{\infty}(U)$, $i=1,\ldots,m$. Then, 
	\begin{equation}
		\nabla_{Y}^{E_{\chi}}\phi=\sum_{i=1}^{m}\nabla_{Y}\phi_{i}\otimes e_{i}, \qquad (Y\in C^{\infty}(X,TX)).\label{eq:ConnectionLocally}
	\end{equation}
	The local expression above is independent of the choice of the basis of flat sections of $E_{\chi}\lvert_{U}$, since the transition maps comparing flat sections are constant. By {\eqref{eq:DefDeltaSharp}} and \eqref{eq:ConnectionLocally}, the twisted Bochner--Laplace operator acting on $C^{\infty}(U,E_{\chi}\lvert_{U})$ is given by
	\begin{equation}\label{sharploc}
		\Delta_\chi^\sharp\phi=\sum_{i=1}^{m}(\Delta\phi_{i})\otimes e_{i},
	\end{equation}
	where $\Delta$ denotes the Laplace--Beltrami operator on $X$.

	Let now $\widetilde{E}_{\chi}$ be the pullback to $\widetilde{X}=\H^2$ of $E_{\chi}$.
	Then,
	\begin{equation*}
		\widetilde{E}_{\chi}\cong \widetilde{X}\times V_{\chi},
	\end{equation*}
	and
	\begin{equation}\label{iso}
		C^{\infty}(\widetilde{X},\widetilde{E}_{\chi})\cong  C^{\infty}(\widetilde{X})\otimes V_{\chi}.
	\end{equation}
	With respect to the isomorphism {\eqref{iso}}, it follows from {\eqref{sharploc}} that the lift $\widetilde{\Delta}^{\sharp}_\chi$ of $\Delta_\chi^{\sharp}$ to $\widetilde{E}_{\chi}$ takes the form
	\begin{equation}\label{eq:DeltaSharpTilde}
		\widetilde{\Delta}^{\sharp}_\chi = \widetilde{\Delta}\otimes\Id_{V_{\chi}},
	\end{equation}
	where $\widetilde{\Delta}$ is the Laplace--Beltrami operator on $\H^2$.
	
	If we choose a Hermitian metric on $E_{\chi}$, then $\Delta_\chi^{\sharp}$ acts in $L^{2}(X,E_{\chi})$ with domain $C^{\infty}(X,E_\chi)$. However, it is not a formally self-adjoint operator in general. By {\eqref{sharploc}}, $\Delta_\chi^{\sharp}$ has principal symbol 
	\begin{equation*}
		\sigma_{\Delta_\chi^\sharp}(x,\xi)=\lVert \xi \rVert^ {2}_{x} \Id_{({E_{\chi})_{x}}}
		\qquad (x\in X, \xi\in T_{x}^{*}X).
	\end{equation*}
	Hence, $\Delta_\chi^\sharp$ is an elliptic, second order differential 
	operator with the following spectral properties:
	its spectrum is discrete and contained in a translate of a positive cone $C\subset \C$ such that $\R^{+}\subset C$.
	This fact follows from classical spectral theory of elliptic 
	operators, under the assumption of the compactness of the manifold.
	We refer the reader to \cite{Sh}, and also \cite[Lemma 2.1]{M1}. Moreover, the direct sum of all generalized eigenspaces is dense in $L^2(X,E_\chi)$.

	\subsection{The trace formula}\label{sec:TraceFormula}
	
	For the derivation of the trace formula, we follow the classical work of Wallach~\cite{Wab}, and its extension to non-unitary representations of $\Gamma$ by M\"{u}ller~\cite{M1}. This trace formula will be the basic tool to prove the meromorphic continuation of the twisted dynamical zeta functions in the next section.
	
	We denote by $\spec(\Delta^{\sharp}_{\chi})\subseteq\C$ the (discrete) spectrum of $\Delta^{\sharp}_{\chi}$. For $\mu\in    
     \spec(\Delta^{\sharp}_{\chi})$, we write $L^2(X,E_\chi)_\mu$ for the corresponding generalized eigenspace.
	We define the algebraic multiplicity $m(\mu)$ of $\mu$ as $m(\mu):=\dim L^2(X,E_\chi)_\mu$.
	
	We want to utilize the heat operator $e^{-t{\Delta}^{\sharp}_{\chi}}$, induced by $\Delta^{\sharp}_{\chi}$,
	as an integral, trace class operator and derive a corresponding trace formula. In \cite{M1}, a Selberg trace formula
	for non-unitary twists is derived for particular test functions $\phi$, the Paley--Wiener functions on $\C$ (see \cite[p. 2079]{M1}). Since the exponential function $\phi(\lambda)=e^{-t\lambda^2}$, $\lambda\in \C$ does not belong to this space, we use the extended results obtained in \cite[p. 171--173]{Spilioti2018}. With this, we conclude
	that the heat operator $e^{-t{\Delta}^{\sharp}_{\chi}}$
	is an integral operator with smooth kernel,
	i.e., there exists a smooth section $H^{\chi}_{t}$ of $\End(E_\chi)$ on
	on $X\times X$ such that for $f\in L^{2}(X,E_{\chi})$, we have
	\begin{equation*}
		e^{-t{\Delta}^{\sharp}_{\chi}}f(x)=\int_{X}H^{\chi}_{t}(x,y)f(y)dy.
	\end{equation*}
	By \cite[Proposition 2.5]{M1}, this operator is of trace class.
	By Lidskii's theorem~\cite[Theorem 3.7]{SB}, we have 
	\begin{equation}
		\tr(e^{-t{\Delta}^{\sharp}_{\chi}})=\sum_{\mu\in
			\spec(\Delta^{\sharp}_{\chi})}m(\mu)e^{-t\mu},
	\end{equation}
	which is the spectral side of our (pre)-trace formula.
	Let $H_{t}$ be the kernel of the heat operator
	$e^{-t\widetilde{\Delta}}$, that is the operator induced by the self-adjoint Laplacian $\widetilde{\Delta}$ acting in $L^2(\widetilde{X})$. By \cite[Lemma 2.3 and Proposition 2.4]{BM}, $H_t$ is contained in the Harish-Chandra $L^q$-Schwartz space $\mathcal{C}^q(G)$ for any $q>0$ (see e.g. \cite[p. 161--162]{BM} for the definition of the Schwartz space). Moreover, as in \cite[p. 174]{Spilioti2018}, it follows from \eqref{eq:DeltaSharpTilde} that
	\begin{equation}
		H_{t}^\chi(x,x')=\sum_{\gamma \in \Gamma}H_{t}(g^{-1}\gamma g')\chi(\gamma),\label{eq:HeatKernelAsSumOverGamma}
	\end{equation}
	where $x=\Gamma g K,x'=\Gamma g' K\in X$ with $g,g'\in G$. By \cite[Proposition 5.3]{Spilioti2018}, we have the following result.
	
	\begin{prop}
		Let $E_{\chi}$ be a flat vector bundle over $X=\Gamma\backslash \widetilde{X}$ associated with a finite-dimensional, complex
		representation $\chi\colon\Gamma\rightarrow \GL(V_{\chi})$ of $\Gamma$. Let $\Delta_{\chi}^{\sharp}$ be the twisted Bochner--Laplace operator acting in $L^2(X,E_{\chi})$. Then, 
		\begin{equation}
			\sum_{\mu\in\spec(\Delta_{\chi}^\sharp)}m(\mu)e^{-t\mu} = \tr(e^{-t\Delta_{\chi}^{\sharp}}) =\int_{\Gamma\backslash G}\left(\sum_{\gamma \in \Gamma}\tr \chi(\gamma)\cdot H_{t}(g^{-1}\gamma g)\right)\,d\dot{g}.
		\end{equation}
	\end{prop}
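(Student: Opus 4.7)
The plan is to exploit the fact that the main analytic ingredients have already been assembled: the heat operator $e^{-t\Delta_\chi^\sharp}$ is trace class (by the cited \cite[Proposition 2.5]{M1}), it has a smooth kernel $H_t^\chi$, and that kernel has the explicit form \eqref{eq:HeatKernelAsSumOverGamma}. The first equality of the statement is simply Lidskii's theorem applied to this trace class operator, so all the work lies in the second equality, and the proof is essentially a careful unfolding exactly parallel to \cite[Proposition 5.3]{Spilioti2018}.

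First I would use the standard identity for the trace of an integral operator with smooth kernel on the compact manifold $X$, namely
\[
\tr\bigl(e^{-t\Delta_\chi^\sharp}\bigr)=\int_X \tr_{(E_\chi)_x}H_t^\chi(x,x)\,dx.
\]
Next I would insert \eqref{eq:HeatKernelAsSumOverGamma} at $x'=x$. Because the scalar heat kernel $H_t$ on $\widetilde X$ is $\C$-valued, taking the fiberwise trace commutes with the sum, and one obtains $\tr_{(E_\chi)_x}H_t^\chi(x,x)=\sum_{\gamma\in\Gamma}\tr\chi(\gamma)\cdot H_t(g^{-1}\gamma g)$ for any $g\in G$ projecting to $x$. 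To move from an integral over $X=\Gamma\backslash G/K$ to one over $\Gamma\backslash G$, I would use that $H_t$ is $K$-bi-invariant as a function on $G$: for $k\in K$, $H_t((gk)^{-1}\gamma(gk))=H_t(k^{-1}(g^{-1}\gamma g)k)=H_t(g^{-1}\gamma g)$, so the integrand is right $K$-invariant and, with Haar measure normalized so that $K$ has mass $1$, descends to the quotient $\Gamma\backslash G$.

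The only delicate point, and the main obstacle, is the interchange of the $\Gamma$-sum with the integration over the fundamental domain. This requires showing the absolute convergence
\[
\int_{\Gamma\backslash G}\sum_{\gamma\in\Gamma}\bigl|\tr\chi(\gamma)\bigr|\cdot\bigl|H_t(g^{-1}\gamma g)\bigr|\,d\dot g<\infty.
\]
The ingredients are already noted in the excerpt: $H_t\in\mathcal{C}^q(G)$ for every $q>0$ by \cite[Lemma 2.3 and Proposition 2.4]{BM}, so it has rapid decay in the Cartan radial variable; meanwhile $\|\chi(\gamma)\|$, and hence $|\tr\chi(\gamma)|$, grows at most exponentially in $l(\gamma)$, which is comparable to the distance from the identity coset to $\gamma K$ in $\widetilde X$. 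The Gaussian-type decay of $H_t$ dominates this exponential growth, and the lattice point count in hyperbolic balls is well controlled, so the estimate is a direct adaptation of the corresponding convergence argument in \cite[Section 5]{Spilioti2018}. Once Fubini is justified, the second equality of the proposition follows immediately, completing the proof.
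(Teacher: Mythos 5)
The paper does not prove this proposition itself — it simply cites \cite[Proposition~5.3]{Spilioti2018} — and your reconstruction is the standard and correct argument: Lidskii for the first equality, trace-equals-integral-of-kernel-on-the-diagonal for the smooth Schwartz kernel, insertion of \eqref{eq:HeatKernelAsSumOverGamma}, descent from $X$ to $\Gamma\backslash G$ via right $K$-invariance of the integrand with $\Vol(K)=1$, and Fubini.

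One small imprecision in your Fubini justification: $l(\gamma)=\inf_x d(x,\gamma x)$ need not be comparable to $d(o,\gamma o)$, since the base point $o=eK$ can be arbitrarily far from the axis of $\gamma$. The inequality that actually carries the estimate is the trivial one $\sigma(g^{-1}\gamma g)=d(g\cdot o,\gamma g\cdot o)\ge l(\gamma)$, valid for every $g$. Combined with $|\tr\chi(\gamma)|\le c'e^{cl(\gamma)}$, the lattice-point count $\mathcal{N}_\Gamma(R)\le c_3e^{c_4R}$, and the membership $H_t\in\mathcal{C}^q(G)$ for arbitrarily small $q>0$ (which gives $|H_t(g)|\lesssim e^{-A\sigma(g)}$ with $A$ as large as one likes), this already yields the absolute convergence you need uniformly over the compact fundamental domain; no genuinely Gaussian decay is required at this step, although the Donnelly bound invoked later in the proof of Lemma~\ref{trace} would of course also do.
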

	
	As in \cite[Proposition 5.1, Proposition 6.1]{M1} and \cite[p. 172--173, 177--178]{Wa}, we group the summation into conjugacy classes and use the Fourier inversion formula to obtain
	\begin{multline}\label{tracewallach}
		\tr(e^{-t\Delta_{\chi}^{\sharp}})=\dim(V_{\chi})\Vol(X)H_{t}(e)\\
		+\sum_{[\gamma]\neq e} \tr\chi(\gamma)\frac{l(\gamma)}{n_{\Gamma}(\gamma)D(\gamma)}\frac{1}{2\pi}
		\int_{\R}\Theta_\lambda(H_{t})e^{-il(\gamma)\lambda}d\lambda,
	\end{multline}
	where
	$$ D(\gamma) = e^{-\frac{l(\gamma)}{2}} |\det(\Ad(a_{\gamma})|_{\mathfrak{n}}-\Id)|. $$
	Note that every $\gamma\in\Gamma$, $\gamma\neq e$, is hyperbolic, so the summation is over hyperbolic conjugacy classes of $\Gamma$. We therefore refer to the first term in the $\RHS$ of \eqref{tracewallach}
	as the \emph{identity contribution} and to the second term as the \emph{hyperbolic contribution}.
	
	\begin{rmrk}
		The trace formula in \cite{Wa} is obtained for kernel functions that are compactly supported which is not true for the heat kernel. However, the heat kernel is an admissible function in the sense of Gangolli~\cite[p. 407]{gangolli1977}. This is because the heat kernel belongs to the Harish-Chandra $L^{1}$-Schwartz space $\mathcal{C}^{1}(G)$ (see \cite[Proposition 3.1 and p. 411]{gangolli1977}).
	\end{rmrk}
	
	
	Let us first simplify the hyperbolic contribution. The character $\Theta_\lambda$ of $\pi_\lambda$ can be evaluated on the $K$-biinvariant function $H_t$ in terms of the spherical Fourier transform $\widetilde{H_{t}}(\lambda)$ of $H_{t}$:
	\begin{equation*}
		\Theta_{\lambda}(H_{t}) = \tr\pi_{\lambda}(H_{t}) = \int_{G} \langle\pi_{\lambda}(g)v,v\rangle H_{t}(g)dg = \int_{G} \phi_\lambda(g)H_{t}(g)dg = \widetilde{H_{t}}(\lambda),
	\end{equation*}
	where $v\in \mathcal{H}_\lambda^K$, with $\lVert v \rVert=1$ and $\phi_\lambda(g)=\langle\pi_{\lambda}(g)v,v\rangle$ denotes the associated spherical function (see e.g. \cite[Chapter IV]{Hel} for details).
	
	\begin{lem}
		\begin{equation}\label{character}
			\Theta_{\lambda}(H_{t})=e^{-t(\lambda^2+\frac{1}{4})}.
		\end{equation}
	\end{lem}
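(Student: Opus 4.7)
The plan is to exploit the identity established just before the lemma, namely $\Theta_\lambda(H_t)=\widetilde{H_t}(\lambda)=\int_G \phi_\lambda(g)H_t(g)\,dg$, and then differentiate in $t$ using the heat equation, reducing the computation of the spherical Fourier transform to knowing the eigenvalue of the Laplace--Beltrami operator on the spherical function $\phi_\lambda$.

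First I would recall the standard fact that the spherical function $\phi_\lambda$ associated with $\pi_\lambda$ is a $K$-biinvariant eigenfunction of $\widetilde{\Delta}$ with eigenvalue $\lambda^2+\rho^2$. Under the identification $\mathfrak{a}_\C^*\simeq\C$ fixed in Subsection~2.3 we have $\rho=\tfrac12$, so the eigenvalue is $\lambda^2+\tfrac14$. This is the key input and is where the representation-theoretic content enters; it follows from the fact that the Casimir acts by $\lambda^2-\rho^2$ on $\pi_\lambda$ and that $\widetilde{\Delta}$ coincides with $-\Omega+\rho^2$ on $K$-invariant vectors for a suitably normalised Casimir $\Omega$, together with $\phi_\lambda(e)=1$.

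Next I would differentiate $t\mapsto \widetilde{H_t}(\lambda)$ in $t$. Since $\partial_tH_t=-\widetilde{\Delta}H_t$ and $\widetilde{\Delta}$ is formally self-adjoint on $\H^2$, integration by parts (justified because $H_t\in\mathcal{C}^q(G)$, as noted in the text) yields
\begin{equation*}
\frac{d}{dt}\widetilde{H_t}(\lambda)=\int_G\phi_\lambda(g)\,\partial_tH_t(g)\,dg=-\int_G(\widetilde{\Delta}\phi_\lambda)(g)H_t(g)\,dg=-\left(\lambda^2+\tfrac14\right)\widetilde{H_t}(\lambda).
\end{equation*}
Thus $\widetilde{H_t}(\lambda)=\widetilde{H_0}(\lambda)\,e^{-t(\lambda^2+1/4)}$. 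The initial condition is handled by taking $t\to 0^+$: $H_t$ is an approximate identity at $e\in G$, so $\widetilde{H_t}(\lambda)\to \phi_\lambda(e)=1$, giving the claimed formula.

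The only genuine obstacle is the eigenvalue identification $\widetilde{\Delta}\phi_\lambda=(\lambda^2+\tfrac14)\phi_\lambda$, since this requires fixing the correspondences between the parameter $\lambda\in\mathfrak{a}_\C^*$, the normalisation of $\rho$, and the sign/normalisation of the Laplacian~\eqref{Lb2} used throughout the paper; once that bookkeeping is done, the rest is the routine heat-equation argument above.
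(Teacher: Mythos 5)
Your overall strategy — identify $\Theta_\lambda(H_t)$ with the spherical transform $\widetilde{H_t}(\lambda)$, note that $\phi_\lambda$ is a $\widetilde{\Delta}$-eigenfunction with eigenvalue $\lambda^2+\tfrac14$, and solve the resulting ODE in $t$ with initial condition $\phi_\lambda(e)=1$ — is correct and is, in effect, a first-principles re-derivation of the formula the paper cites as a black box. The paper instead quotes \cite[eq.~(2.11)]{BM} for $\widetilde{H_t}(\lambda)=e^{t\pi_\lambda(\Omega)}$ and \cite[Prop.~8.22]{Knapp} for $\pi_\lambda(\Omega)=-\lambda^2-\rho^2$; your heat-equation argument is precisely the proof of the Barbasch--Moscovici identity, so the two routes are mathematically the same fact presented with different amounts of unwinding.

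However, the intermediate bookkeeping in your second paragraph does not cohere. For the unitary principal series $\pi_\lambda=\operatorname{Ind}_P^G(e^{i\lambda}\otimes 1)$ (note the $i$ in the exponent), the Casimir eigenvalue is $(i\lambda)^2-\rho^2=-\lambda^2-\rho^2$, not $\lambda^2-\rho^2$; and the positive Laplacian $\widetilde{\Delta}$ of \eqref{Lb2} corresponds to $-\Omega$ on right-$K$-invariant functions, with no $\rho^2$ shift. Composing the two formulas you actually wrote — $\Omega\phi_\lambda=(\lambda^2-\rho^2)\phi_\lambda$ and $\widetilde{\Delta}=-\Omega+\rho^2$ — yields $\widetilde{\Delta}\phi_\lambda=(2\rho^2-\lambda^2)\phi_\lambda=\bigl(\tfrac12-\lambda^2\bigr)\phi_\lambda$, which contradicts the eigenvalue $\lambda^2+\tfrac14$ that you correctly state and then use. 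You do flag the bookkeeping as the one remaining obstacle, but as written the two conventions you invoke cancel incorrectly rather than compose; the fix is simply to restore the minus sign in the Casimir eigenvalue and drop the spurious $\rho^2$ shift, after which the rest of the heat-equation argument goes through unchanged.
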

	
	\begin{proof}
		By \cite[eq. (2.11)]{BM}, 
		\begin{equation*}
			\widetilde{H_t}(\lambda)=e^{t\pi_\lambda(\Omega)},
		\end{equation*}
		where $\Omega$ is the Casimir element in the center of the universal enveloping algebra of $\mathfrak{sl}(2,\C)$.
		By \cite[Proposition 8.22]{Knapp}, 
		\begin{equation*}
			\pi(\Omega)=-\lambda^2-\rho^2.
		\end{equation*}
		Recall now from Subsection 2.3 that $\rho$ corresponds to $\frac{1}{2}$.
	\end{proof}
	
	We now turn to the identity contribution in \eqref{tracewallach}. The function $H_{t}$ on $G$ belongs to the Harish-Chandra $L^{q}$-Schwartz space. Hence, the Fourier inversion formula \cite[Theorem 3]{HC} can be applied to $H_{t}$
	(see also Theorem 1 in  \cite{ANKER1991331} for $K$-biinvariant test functions in the Harish-Chandra $L^{q}$-Schwartz space for $0<q\leq2$).
	By \cite[Theorem~7.5~(i)]{Hel} (see also \cite[eq. (28) and (29), p. 42]{Hel}), we have
	\begin{equation}\label{planch2a}
		H_{t}(e)=\frac{1}{4\pi^2}\int_{\R}\Theta_{\lambda}(H_{t})\rvert c(\lambda)\lvert^{-2} d\lambda
	\end{equation}
	with the $c$-function $c(\lambda)$ given by (note the change of variables $\lambda$ vs. $\frac{\lambda}{2}$ compared to \cite{Hel})
	\begin{equation}\label{planch2b}
		\lvert c(\lambda)\lvert^{-2}=\lambda\pi\tanh\lambda\pi,\quad \lambda\in\R.
	\end{equation}
	
	\begin{thm}[Trace formula]\label{thm:TraceFormula}
		Let $\chi\colon\Gamma\rightarrow\GL(V_{\chi})$ be a finite-dimensional representation of $\Gamma$. 
		Then, the following Selberg trace formula for the operator $e^{-t\Delta_{\chi}^{\sharp}}$ holds:
		\begin{multline}\label{selberg2}
			\tr(e^{-t\Delta_{\chi}^{\sharp}})=\frac{1}{4\pi^2}\dim(V_{\chi})\Vol(X)\int_{\R}e^{-t(\lambda^2+\frac{1}{4})}\lambda\pi\tanh\lambda\pi d\lambda\\
			+\sum_{[\gamma]\neq e} \tr\chi(\gamma)\frac{l(\gamma)}{n_{\Gamma}(\gamma)D(\gamma)}\frac{1}{2\pi}
			\int_{\R}e^{-t(\lambda^2+\frac{1}{4})}e^{-il(\gamma)\lambda}d\lambda.
		\end{multline}
	\end{thm}
	
	\begin{proof}
		The trace formula \eqref{selberg2} follows from \eqref{tracewallach}, \eqref{character}, \eqref{planch2a} and the explicit formula for the $c$-function \eqref{planch2b}.
	\end{proof}

	\section{Twisted dynamical zeta functions on compact hyperbolic surfaces}\label{sec:ZetaFunctions}
	
	In this section we prove meromorphic continuation of the twisted Selberg and the Ruelle zeta function, obtain their functional equations and study the behaviour of the twisted Ruelle zeta function at $s=0$.
	
	\subsection{Definition and convergence}
	
	Let $\chi\colon \Gamma\rightarrow \GL(V_{\chi})$ be a finite-dimensional, complex representation of $\Gamma$. For $s\in\C$,  we define the twisted Selberg zeta function, associated with $\chi$, by the infinite product
	\begin{equation}\label{selbergzeta}
		Z(s;\chi):=\prod_{\substack{[\gamma]\neq{e}\\ [\gamma]\prim}} \prod_{k=0}^{\infty}\det\big(\Id-\chi(\gamma)e^{-(s+k)l(\gamma)}\big).
	\end{equation}
	It is proven in \cite[Theorem 3.1]{fedosova2020meromorphic} that the product in \eqref{selbergzeta} converges absolutely for $s$ in some right half plane in $\C$. To keep the paper self-contained, and also because we need parts of the computation later on, we include a proof of this result:
	
	\begin{prop}\label{SelbergConvergence}
		There exists a positive constant $c_1$ such that the product \eqref{selbergzeta} defining the twisted Selberg zeta function converges absolutely and uniformly on compact subsets of the half-plane $\RE(s)>c_{1}$.
	\end{prop}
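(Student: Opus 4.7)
The plan is to take logarithms and reduce the convergence of the infinite product to the absolute convergence of the series
\[
\sum_{\substack{[\gamma]\neq e\\ [\gamma]\prim}}\sum_{k=0}^\infty \log\det\bigl(\Id-\chi(\gamma)e^{-(s+k)l(\gamma)}\bigr).
\]
For $\Re(s)$ sufficiently large, each factor $\chi(\gamma)e^{-(s+k)l(\gamma)}$ has spectral radius less than $1$, so I can apply the identity $-\log\det(\Id-A)=\sum_{m\geq 1}\tfrac{\tr(A^m)}{m}$. This turns the question into the absolute convergence of a triple sum indexed by primitive $[\gamma]$, $k\geq 0$, $m\geq 1$, with general term bounded by $\frac{\dim V_\chi}{m}\|\chi(\gamma)\|^m e^{-\Re(s+k)m\,l(\gamma)}$.

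Two geometric inputs now enter. First, since $\Gamma$ is a finitely generated group of isometries of $\H^2$ acting cocompactly, the Milnor--\v{S}varc lemma gives that the word length of $\gamma$ with respect to a fixed finite generating set is bounded by a linear function of the displacement $l(\gamma)$. Combined with the trivial bound $\|\chi(\gamma)\|\leq M^{|\gamma|}$, where $M$ is an operator-norm bound for $\chi$ on the generators, one obtains constants $C,c_0>0$ with
\[
\|\chi(\gamma)\|\leq C\,e^{c_0\,l(\gamma)}\qquad\text{for all }\gamma\in\Gamma\setminus\{e\}.
\]
Second, the primitive counting function satisfies the elementary bound $\#\{[\gamma]\prim:l(\gamma)\leq T\}\leq C'\,e^{T}$, which follows from comparing $\Gamma$-orbits in $\H^2$ to hyperbolic balls (no use of the prime geodesic theorem is needed). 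Moreover, the systole $\ell_0:=\inf_{[\gamma]\prim}l(\gamma)$ is strictly positive by discreteness and cocompactness.

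Inserting these inputs into the triple sum, the $k$-sum is a geometric series yielding the factor $(1-e^{-ml(\gamma)})^{-1}\leq(1-e^{-\ell_0})^{-1}$, uniformly in $m$ and $\gamma$. The $m$-sum is then a logarithmic series in the quantity $\|\chi(\gamma)\|\,e^{-\Re(s)l(\gamma)}$, which, once $\Re(s)>c_0+\log(C)/\ell_0$, is dominated by $2\|\chi(\gamma)\|e^{-\Re(s)l(\gamma)}\leq 2C\,e^{(c_0-\Re s)l(\gamma)}$. Finally, summing over primitive conjugacy classes and applying Abel summation with the exponential counting bound, one concludes absolute and uniform convergence on compact subsets of any half-plane $\Re(s)>c_1$ with $c_1>c_0+1$.

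The main obstacle is establishing the first geometric input: controlling $\|\chi(\gamma)\|$ by the translation length $l(\gamma)$ rather than by the word length. Once this is in place, the remainder of the argument is a routine combination of the Taylor expansion of $\log\det$, a geometric series in $k$, and the elementary exponential bound on the counting function for primitive closed geodesics.
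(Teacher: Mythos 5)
Your proposal is correct and follows essentially the same route as the paper: take logarithms, expand $\log\det$ as a trace power series, bound $\|\chi(\gamma)\|$ by $e^{c\,l(\gamma)}$ (the paper cites Wotzke's thesis for this estimate, where it is proved by exactly the Milnor--\v{S}varc comparison of word length with translation length that you sketch), and finish with the exponential counting bound for conjugacy classes. The only cosmetic difference is that the paper collapses the pair (primitive $[\gamma_0]$, power $j$) into a single sum over \emph{all} nontrivial conjugacy classes via $\chi(\gamma_0)^j=\chi(\gamma_0^j)$ and $j\,l(\gamma_0)=l(\gamma_0^j)$, so that the inner sums over $k$ and $j$ disappear into the closed form $\tr(\chi(\gamma))\,e^{-sl(\gamma)}/(n_\Gamma(\gamma)(1-e^{-l(\gamma)}))$, whereas you bound the triple sum directly; both routes use the same two geometric inputs and give the same half-plane of convergence.
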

	\begin{proof}
	Let $\Vert\cdot\Vert$ be the operator norm associated to a fixed norm on $V_{\chi}$.
	By \cite[p. 10]{Wo}, there exists a $c\geq 0$ such that 
	\begin{equation}\label{chibound}
	\Vert\chi(\gamma)\Vert\leq e^{cl(\gamma)}. 
	\end{equation}
	Hence, for $\RE(s)\gg 0$, by \eqref{selbergzeta}, we get
		\begin{align}\label{logselbergzeta}
			\log Z(s;\chi)\notag=&\sum_{\substack{[\gamma]\neq{e}\\ [\gamma]\prim}} \sum_{k=0}^{\infty}\tr\log(1-\chi(\gamma)e^{-(s+k)l(\gamma)})\\\notag
			=&-\sum_{\substack{[\gamma]\neq{e}\\ [\gamma]\prim}}\sum_{k=0}^{\infty}\sum_{j=1}^{\infty}\frac{\tr((\chi(\gamma)e^{-(s+k)l(\gamma)})^{j})}{j}\\\notag
			=&-\sum_{[\gamma]\neq{e}}\sum_{k=0}^{\infty} \frac{1}{n_{\Gamma}(\gamma)}\tr(\chi(\gamma))e^{-(s+k)l(\gamma)}\\
			=&-\sum_{[\gamma]\neq{e}}\frac{1}{n_{\Gamma}(\gamma)}\tr(\chi(\gamma))\frac{e^{-sl(\gamma)}}{1-e^{-l(\gamma)}}.
		\end{align}
		By the inequality $\vert \tr(\chi(\gamma))\vert \leq \dim(V_{\chi})\Vert\chi(\gamma)\Vert$
		and \eqref{chibound}, we get 
	\begin{equation}\label{eq:EstimateTraceChi}
	\vert \tr(\chi(\gamma))\vert \leq c'e^{cl(\gamma)}
	\end{equation}
	(see also \cite[Lemma 3.3]{Spilioti2018}).
		Moreover, if we define
		\begin{equation*}
			\mathcal{N}_\Gamma^C(R):=\sharp\{[\gamma] \in C( \Gamma)\colon l(\gamma)\leq R\}\qquad (R\geq 0),
		\end{equation*}
		where $C(\Gamma)$ denotes the set of $\Gamma$-conjugacy classes, then by \cite[equation (1.31)]{BO} there exists a positive constant $C$ such that
		\begin{equation}\label{lenghtgrowthestimates}
			\mathcal{N}^{C}_{\Gamma}(R)\leq C e^{R}.
		\end{equation}
		Hence, the assertion follows from \eqref{logselbergzeta}, \eqref{eq:EstimateTraceChi} and \eqref{lenghtgrowthestimates}.
	\end{proof}
	
	\begin{lem}\label{selberglog}
		Let $L(s;\chi):=\frac{d}{ds}\log Z(s;\chi)$ be the logarithmic derivative of $Z(s;\chi)$. Then,
		\begin{equation}\label{logder}
			L(s;\chi)= \sum_{[\gamma]\neq{e}}\frac{l(\gamma)\tr(\chi(\gamma))}{2n_{\Gamma}(\gamma)\sinh(l(\gamma)/2)}e^{-(s-\frac{1}{2})l(\gamma)}.
		\end{equation}
	\end{lem}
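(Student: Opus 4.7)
The plan is to simply differentiate the closed-form expression for $\log Z(s;\chi)$ that was already obtained inside the proof of Proposition~\ref{SelbergConvergence}, and then rewrite the resulting geometric-series factor in terms of $\sinh$.

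First I would recall from the computation \eqref{logselbergzeta} that, for $\RE(s)$ sufficiently large,
\begin{equation*}
\log Z(s;\chi) = -\sum_{[\gamma]\neq e}\frac{\tr(\chi(\gamma))}{n_{\Gamma}(\gamma)}\cdot\frac{e^{-sl(\gamma)}}{1-e^{-l(\gamma)}}.
\end{equation*}
Since the proof of Proposition~\ref{SelbergConvergence} shows that this series converges absolutely and uniformly on compact subsets of the half-plane $\RE(s)>c_1$, standard theorems allow termwise differentiation in $s$ on that half-plane. Differentiating produces an extra factor of $-(-l(\gamma))=l(\gamma)$, giving
\begin{equation*}
L(s;\chi) = \sum_{[\gamma]\neq e}\frac{l(\gamma)\tr(\chi(\gamma))}{n_{\Gamma}(\gamma)}\cdot\frac{e^{-sl(\gamma)}}{1-e^{-l(\gamma)}}.
\end{equation*}

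Next I would rewrite the factor $e^{-sl(\gamma)}/(1-e^{-l(\gamma)})$ in the form that appears in the statement. Multiplying numerator and denominator by $e^{l(\gamma)/2}$ gives
\begin{equation*}
\frac{e^{-sl(\gamma)}}{1-e^{-l(\gamma)}} = \frac{e^{-(s-\frac{1}{2})l(\gamma)}}{e^{l(\gamma)/2}-e^{-l(\gamma)/2}} = \frac{e^{-(s-\frac{1}{2})l(\gamma)}}{2\sinh(l(\gamma)/2)},
\end{equation*}
and substituting this in yields exactly \eqref{logder}.

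There is essentially no obstacle here: the hardest step is purely bookkeeping, namely to invoke the already-established absolute and uniform convergence from Proposition~\ref{SelbergConvergence} in order to justify differentiating the series \eqref{logselbergzeta} under the summation sign. Once termwise differentiation is permitted, the rest is the one-line identity relating $(1-e^{-l(\gamma)})^{-1}$ to $(2\sinh(l(\gamma)/2))^{-1}$ via the shift $s\mapsto s-\tfrac{1}{2}$.
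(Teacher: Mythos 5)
Your proof is correct and takes exactly the approach the paper intends: the paper simply says \eqref{logder} ``follows easily by differentiating \eqref{logselbergzeta},'' and you supply that differentiation together with the elementary identity $(1-e^{-l(\gamma)})^{-1}e^{-sl(\gamma)}=(2\sinh(l(\gamma)/2))^{-1}e^{-(s-\frac12)l(\gamma)}$. The appeal to absolute and uniform convergence from Proposition~\ref{SelbergConvergence} to justify termwise differentiation is the right justification and matches what the paper implicitly relies on.
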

	\begin{proof}
		\eqref{logder} follows easily by differentiating \eqref{logselbergzeta}.
	\end{proof}
	
	We define the twisted Ruelle zeta function, associated with $\chi$,  by the infinite product
	\begin{equation}\label{ruellezeta}
		R(s;\chi)=\prod_{\substack{[\gamma]\neq{e}\\ [\gamma]\prim}}\det (\Id-\chi(\gamma)e^{-sl(\gamma)}).
	\end{equation}

	\begin{prop}\label{prop:RuelleConvergence}
		There exists a positive constant $c_2$ such that the product \eqref{ruellezeta} defining the twisted Ruelle zeta 
		function converges absolutely and uniformly on compact subsets of the half-plane $\RE(s)>c_{2}$.
	\end{prop}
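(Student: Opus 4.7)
The plan is to mirror the argument in Proposition~\ref{SelbergConvergence}, but for the simpler product defining $R(s;\chi)$. First I would take the logarithm of \eqref{ruellezeta} and expand each factor $\log\det(\Id-\chi(\gamma)e^{-sl(\gamma)})=\tr\log(\Id-\chi(\gamma)e^{-sl(\gamma)})$ as the Taylor series
\begin{equation*}
\log R(s;\chi)=-\sum_{\substack{[\gamma]\neq e\\ [\gamma]\prim}}\sum_{j=1}^{\infty}\frac{\tr(\chi(\gamma)^{j})}{j}e^{-sjl(\gamma)},
\end{equation*}
which is formally valid provided $\RE(s)$ is large enough that $\|\chi(\gamma)e^{-sl(\gamma)}\|<1$; using \eqref{chibound} this holds as soon as $\RE(s)>c$.

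Next I would reorganize the double sum over primitive classes and powers as a single sum over all nontrivial conjugacy classes, exactly as in \eqref{logselbergzeta}: every $[\gamma]\neq e$ is of the form $[\gamma_0^n]$ with $\gamma_0$ primitive and $n=n_\Gamma(\gamma)$, with $l(\gamma)=nl(\gamma_0)$ and $\chi(\gamma)=\chi(\gamma_0)^n$. This collapses the inner $j$-sum and yields
\begin{equation*}
\log R(s;\chi)=-\sum_{[\gamma]\neq e}\frac{\tr(\chi(\gamma))}{n_\Gamma(\gamma)}e^{-sl(\gamma)}.
\end{equation*}

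To establish convergence, I would apply the two estimates already recorded in the Selberg proof: the exponential bound \eqref{eq:EstimateTraceChi}, $|\tr\chi(\gamma)|\leq c'e^{cl(\gamma)}$, and the geodesic counting estimate \eqref{lenghtgrowthestimates}, $\mathcal{N}_\Gamma^C(R)\leq Ce^R$. Splitting the sum into length dyadic shells or integrating with respect to $d\mathcal{N}_\Gamma^C(R)$, one sees that the series converges absolutely whenever $\RE(s)>c+1$. One then chooses $c_2$ to be any such constant; uniform convergence on compact subsets of $\{\RE(s)>c_2\}$ is automatic since all the bounds are uniform in $s$ on such sets, and absolute convergence of the logarithmic series implies absolute and uniform convergence of the original product.

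I do not expect any real obstacle: compared with Proposition~\ref{SelbergConvergence} the Ruelle case is strictly easier, since there is no inner product over $k$ producing the additional factor $(1-e^{-l(\gamma)})^{-1}$, so no extra work is required to bound that factor (it is bounded on $\RE(s)\gg 0$ regardless). The entire argument reduces to a verbatim repetition of the Selberg estimate with this minor simplification.
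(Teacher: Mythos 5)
Your proposal matches the paper's proof: the paper likewise takes logarithms, expands the Taylor series, reorganizes the double sum over primitive classes and powers $j$ into a single sum over all nontrivial conjugacy classes to obtain $\log R(s;\chi)=-\sum_{[\gamma]\neq e}\tfrac{1}{n_\Gamma(\gamma)}\tr(\chi(\gamma))e^{-sl(\gamma)}$, and then invokes the same estimates \eqref{eq:EstimateTraceChi} and \eqref{lenghtgrowthestimates} used for Proposition~\ref{SelbergConvergence}. Your observation that the Ruelle case is strictly simpler (no inner $k$-sum, hence no factor $(1-e^{-l(\gamma)})^{-1}$) is correct and is implicit in the paper's one-line remark that the argument is ``similar''.
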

	\begin{proof}
		The proof is similar to the proof of Proposition~\ref{SelbergConvergence} since
		\begin{align}\label{logruellezeta}
			\log R(s;\chi)\notag=&\sum_{\substack{[\gamma]\neq{e}\\ [\gamma]\prim}} \tr\log(1-\chi(\gamma)e^{-sl(\gamma)})\\
			=&-\sum_{\substack{[\gamma]\neq{e}\\ [\gamma]\prim}}\sum_{j=1}^{\infty}\frac{\tr((\chi(\gamma)e^{-sl(\gamma)})^{j})}{j}\\=&-\sum_{[\gamma]\neq{e}}\frac{1}{n_{\Gamma}(\gamma)}\tr(\chi(\gamma))e^{-sl(\gamma)}.\qedhere
		\end{align}
	\end{proof}
	
	\begin{rmrk}
	The constants $c_{1}, c_{2}$ in Propositions \ref{selberglog} and \ref{prop:RuelleConvergence},
	respectively, can be explicitly determined. In fact, we can chose $c_{1}=c_{2}=c+1$,
	 where $c$ is as in \eqref{chibound}. 
	We remark also that one can consider the notion of the critical exponent of $
	\chi$, in order to eliminate the dependence on the norm $\Vert \cdot\Vert$ on $V_{\chi}$
	in the estimates \eqref{chibound}. The critical exponent $c_{\chi}$ is defined as
	$c_{\chi}:=\inf\{c\geq 0, \text{such that \eqref{chibound} holds}\}$.
	Note that for $\chi$ being unitary, we have that $c_{\chi}=0$.
	For more details we refer the reader to \cite[p. 10]{Wo}.
	\end{rmrk}
	
	\begin{lem}\label{ruelleselberg}
		The twisted Ruelle zeta function and the twisted Selberg zeta function are related by the following identity:
		\begin{equation}\label{ruellesel}
			R(s;\chi)=\frac{Z(s;\chi)}{Z(s+1;\chi)}.
		\end{equation}
	\end{lem}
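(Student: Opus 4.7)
The plan is to verify the identity directly by comparing the Euler product factorizations of $Z(s;\chi)$ and $Z(s+1;\chi)$, observing that shifting $s$ by $1$ in the Selberg product amounts to an index shift $k \mapsto k+1$ in the inner product over $k$. Concretely, I would start from the definition
\begin{equation*}
Z(s+1;\chi) = \prod_{\substack{[\gamma]\neq e \\ [\gamma]\prim}} \prod_{k=0}^{\infty} \det\bigl(\Id - \chi(\gamma) e^{-(s+1+k)l(\gamma)}\bigr),
\end{equation*}
and reindex the inner product by setting $k' = k+1$, yielding $\prod_{k'=1}^{\infty}\det(\Id - \chi(\gamma) e^{-(s+k')l(\gamma)})$. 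Dividing the analogous expression for $Z(s;\chi)$ by this therefore collapses the telescoping product over $k$, leaving exactly the $k=0$ factor for each primitive class $[\gamma]$, which is $\det(\Id - \chi(\gamma) e^{-sl(\gamma)})$. Taking the product over primitive classes then gives $R(s;\chi)$ by definition \eqref{ruellezeta}.

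To make the reindexing step rigorous rather than merely formal, I would work on a right half-plane $\RE(s) > c$ large enough so that both $Z(s;\chi)$, $Z(s+1;\chi)$, and $R(s;\chi)$ are given by absolutely and uniformly convergent products; Propositions \ref{SelbergConvergence} and \ref{prop:RuelleConvergence} supply such a half-plane (one may simply take $\RE(s) > \max(c_1, c_2) + 1$). There the logarithms may be taken term by term and all rearrangements are justified. Once the identity is established on this half-plane, it extends to all of $\C$ by the meromorphic continuation already in force for $Z(s;\chi)$ (Theorem~A) and the identity principle, which will then give the meromorphic continuation of $R$ as a corollary.

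There is essentially no obstacle here; the only point that requires a line of care is ensuring the inner product over $k$ converges, i.e.\ that $\det(\Id - \chi(\gamma) e^{-(s+k)l(\gamma)})$ tends to $1$ fast enough as $k \to \infty$ for each fixed $\gamma$, so that the telescoping is valid factor by factor. This follows at once from the bound $\|\chi(\gamma)\| \leq e^{c\,l(\gamma)}$ in \eqref{chibound}: for $\RE(s) > c$, the matrix $\chi(\gamma) e^{-(s+k)l(\gamma)}$ has operator norm bounded by $e^{-(\RE(s)-c+k)l(\gamma)}$, which is summable in $k$, so each inner product converges and the term-by-term cancellation between the $k$-th factor of $Z(s+1;\chi)$ and the $(k+1)$-th factor of $Z(s;\chi)$ is legitimate. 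This completes the proof outline.
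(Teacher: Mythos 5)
Your proposal is correct and rests on the same telescoping observation as the paper's proof; the paper simply carries it out at the level of the already-computed logarithmic series \eqref{logselbergzeta} and \eqref{logruellezeta} rather than by reindexing the Euler product directly, but the two are algebraically equivalent. Your added care about absolute convergence on a half-plane and the $\|\chi(\gamma)\|\leq e^{cl(\gamma)}$ bound matches the hypotheses the paper is implicitly relying on via Propositions \ref{SelbergConvergence} and \ref{prop:RuelleConvergence}.
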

	\begin{proof}
		\eqref{ruellesel} follows by considering \eqref{logselbergzeta} at $s+1$
		and \eqref{logruellezeta}.
	\end{proof}
	

	\subsection{Meromorphic continuation}
	
	The trace formula \eqref{selberg2} motivates to consider a shift of the operator $\Delta_{\chi}^{\sharp}$ by $\frac{1}{4}$:
	\begin{equation*}
		A^{\sharp}_{\chi}:=\Delta_{\chi}^{\sharp}-\frac{1}{4}.
	\end{equation*}
	We now explicitly calculate the hyperbolic contribution on the $\RHS$ of \eqref{selberg2}.
	
	\begin{coro}
		Let $X=\Gamma\backslash\H^2$ be a compact hyperbolic surface and $\chi$ be a finite-dimensional, complex representation of $\Gamma$. Then, we have
		\begin{multline}\label{selberg3}
			\tr(e^{-tA^{\sharp}_{\chi}})=\frac{1}{4\pi^2}\dim(V_{\chi})\Vol(X)\int_{\R}e^{-t\lambda^2}\lambda\pi \tanh\lambda\pi d\lambda\\
			+\frac{1}{2\sqrt{4\pi t}}\sum_{[\gamma]\neq e} \frac{l(\gamma)\tr(\chi(\gamma))}{n_{\Gamma}(\gamma)\sinh(l(\gamma)/2)}
			e^{-\frac{l(\gamma)^{2}}{4t}}.
		\end{multline}
	\end{coro}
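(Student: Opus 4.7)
The plan is to obtain \eqref{selberg3} directly from the trace formula \eqref{selberg2} by (i) performing the elementary shift of the operator, (ii) evaluating the Fourier integral in the hyperbolic contribution explicitly as a Gaussian, and (iii) simplifying the factor $D(\gamma)$ into a hyperbolic sine using that $\mathfrak{n}$ is one-dimensional.

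First, I would multiply both sides of \eqref{selberg2} by $e^{t/4}$ and move the factor under the integrals on the right-hand side. This replaces the exponent $e^{-t(\lambda^2+\frac{1}{4})}$ by $e^{-t\lambda^2}$ in both the identity and the hyperbolic terms; the identity contribution then already matches the first term in \eqref{selberg3} verbatim.

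For the hyperbolic contribution, the key computation is the Gaussian Fourier integral
\begin{equation*}
\frac{1}{2\pi}\int_{\R} e^{-t\lambda^2} e^{-il(\gamma)\lambda}\,d\lambda \;=\; \frac{1}{\sqrt{4\pi t}}\, e^{-l(\gamma)^2/(4t)},
\end{equation*}
which is standard. Next I would simplify $D(\gamma)=e^{-l(\gamma)/2}|\det(\Ad(a_\gamma)|_{\mathfrak{n}}-\Id)|$ using the fact recorded at the end of Subsection~2.1 that $\Ad(a_\gamma)|_{\mathfrak{n}} = e^{l(\gamma)}\Id_{\mathfrak{n}}$; since $\mathfrak{n}$ is one-dimensional, this gives
\begin{equation*}
D(\gamma) \;=\; e^{-l(\gamma)/2}\bigl(e^{l(\gamma)}-1\bigr) \;=\; 2\sinh\!\bigl(l(\gamma)/2\bigr).
\end{equation*}
Substituting these two computations into the hyperbolic term of \eqref{selberg2} and collecting the constant $\frac{1}{2\sqrt{4\pi t}}$ in front of the sum yields exactly the second line of \eqref{selberg3}.

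There is no genuine obstacle here: the corollary is a direct specialization of the general trace formula of the preceding theorem, and the only things to verify are the Gaussian integral and the identification $D(\gamma)=2\sinh(l(\gamma)/2)$. The one point worth stating carefully is the use of $\dim\mathfrak{n}=1$, which is what makes $\det(\Ad(a_\gamma)|_{\mathfrak{n}}-\Id)$ collapse to $e^{l(\gamma)}-1$ rather than a product of such factors as would appear in higher rank.
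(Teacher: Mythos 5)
Your proof is correct and follows essentially the same route as the paper: shift away the $\frac14$, evaluate the Gaussian Fourier integral, and reduce $D(\gamma)$ to $2\sinh(l(\gamma)/2)$. The only cosmetic difference is that the paper computes $\det(\Ad(a_\gamma)|_{\mathfrak{n}}-\Id)$ via the alternating sum of traces of exterior powers, while you invoke directly that $\Ad(a_\gamma)|_{\mathfrak{n}}=e^{l(\gamma)}\Id_{\mathfrak{n}}$ with $\dim\mathfrak{n}=1$ — the same fact, stated more plainly.
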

	
	\begin{proof}
		Since $\Ad(a_\gamma)|_{\mathfrak{n}}=e^{l(\gamma)}\Id_{\mathfrak{n}}$, we have
		\begin{multline}\label{di}
			D(\gamma)=e^{-\frac{l(\gamma)}{2}} |\det(\Ad(a_{\gamma})|_{\mathfrak{n}}-\Id)|=e^{-l(\gamma)/2}|e^{l(\gamma)}-1|\\
			=e^{l(\gamma)/2}-e^{-l(\gamma)/2}=2\sinh(l(\gamma)/2). 
		\end{multline}
		Moreover, the integral in the hyperbolic contribution in the $\RHS$
		of \eqref{selberg2} is just the Fourier transform of the function 
		$\lambda\mapsto e^{-t\lambda^2}$. Hence, 
		substituting \eqref{di} in \eqref{selberg2}, we get
		\eqref{selberg3}.
	\end{proof}
	
	Now let $s_{1},s_{2}\in\C$ such that $(s_{1}-1/2)^2,(s_{2}-1/2)^2\in \C  \setminus\spec(-A^{\sharp}_{\chi})$.
	We consider the product of the two resolvents $(A_\chi^\sharp+(s_j-\frac{1}{2})^2)^{-1}$, for $j=1,2$.
	By \cite[Lemma 2.2]{M1}, the operator $A_\chi^\sharp$ satisfies Weyl's Law, which implies $\sum_j|\nu_j|^{-2}<\infty$, where $(\nu_j)_{j\in\Z_+}$ denote the eigenvalues of $A_\chi^\sharp$ with multiplicities. This implies that
	\begin{equation*}
		\big(A^{\sharp}_{\chi}+(s_{1}-1/2)^2\big)^{-1}\big(A^{\sharp}_{\chi}+(s_{2}-1/2)^2\big)^{-1}
	\end{equation*}
	is of trace class, and so is
	\begin{multline}\label{resid}
		\big(A^{\sharp}_{\chi}+(s_{1}-1/2)^2\big)^{-1}-(A^{\sharp}_{\chi}+(s_{2}-1/2)^2\big)^{-1}\notag\\
		=\big((s_{2}-1/2)^2-(s_{1}-1/2)^2\big)\big(A^{\sharp}_{\chi}+(s_{1}-1/2)^2\big)^{-1}\big(A^{\sharp}_{\chi}+(s_{2}-1/2)^2\big)^{-1}.
	\end{multline}
	We observe that for $\RE(s_j)\gg0$
	\begin{equation*}
		(A^{\sharp}_{\chi}+(s_j-1/2)^2)^{-1}
		=\int_{0}^{\infty}e^{-t(s_j-1/2)^2}e^{-tA^{\sharp}_{\chi}}\,dt.
	\end{equation*}
	Hence,
	\begin{multline*}
		\big(A^{\sharp}_{\chi}+(s_{1}-1/2)^2\big)^{-1}-\big(A^{\sharp}_{\chi}+(s_{2}-1/2)^2\big)^{-1}\\
		= \int_{0}^{\infty}
		\big(e^{-t(s_{1}-1/2)^2}-e^{-t(s_{2}-1/2)^2}\big)e^{-tA^{\sharp}_{\chi}}dt.
	\end{multline*}
	
	\begin{lem}[{see \cite[Chapter 2]{BGV04}}]\label{trace}
		There exist coefficients $c_{j}$ such that 
		$\tr e^{-tA^{\sharp}_{\chi}}$ has an asymptotic expansion
		\begin{equation}\label{traceeq}
			\tr e^{-tA^{\sharp}_{\chi}} \sim \sum_{j=0}^{\infty}c_{j}t^{\frac{j-2}{2}} \qquad \mbox{as }t\to0^+.
		\end{equation}
	\end{lem}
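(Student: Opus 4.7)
The plan is to extract the asymptotic expansion directly from the Selberg trace formula \eqref{selberg3} applied to $A_\chi^\sharp = \Delta_\chi^\sharp - \frac{1}{4}$. The entire expansion will come from the identity contribution; the hyperbolic contribution will be shown to be of infinite order as $t \to 0^+$.

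First, I would show that the hyperbolic sum in the right-hand side of \eqref{selberg3} is of order $O(e^{-c/t})$ as $t \to 0^+$ for some $c > 0$. Let $l_0 := \min_{[\gamma]\neq e} l(\gamma) > 0$. Splitting $e^{-l(\gamma)^2/(4t)} \leq e^{-l_0^2/(8t)}\, e^{-l(\gamma)^2/(8t)}$ and combining the bound $|\tr\chi(\gamma)| \leq c' e^{c\, l(\gamma)}$ from \eqref{eq:EstimateTraceChi}, the elementary estimate $\sinh(l(\gamma)/2)^{-1} = O(e^{-l(\gamma)/2})$, and the length-growth bound \eqref{lenghtgrowthestimates}, the remaining series converges absolutely and uniformly in $t$ on bounded sets. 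The prefactor $e^{-l_0^2/(8t)}$ together with the $1/\sqrt{t}$ in \eqref{selberg3} then yields the claimed exponential smallness, so the hyperbolic part contributes zero at every order of the asymptotic expansion.

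Second, I would expand the identity integral. Using the evenness of $\lambda\tanh(\lambda\pi)$ and the decomposition $\tanh(x) = 1 - 2/(e^{2x}+1)$ for $x > 0$, write
$$ \int_\R e^{-t\lambda^2}\lambda\pi\tanh(\lambda\pi)\,d\lambda = \frac{\pi}{t} - 4\int_0^\infty \frac{\pi\lambda\, e^{-t\lambda^2}}{e^{2\lambda\pi}+1}\,d\lambda. $$
In the remaining integral, Taylor-expand $e^{-t\lambda^2} = \sum_{k \geq 0} (-t)^k \lambda^{2k}/k!$ and integrate term by term, which is justified by the rapid decay of $(e^{2\lambda\pi}+1)^{-1}$ at infinity together with standard remainder bounds for the exponential Taylor series. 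The resulting coefficients $\int_0^\infty \pi\lambda^{2k+1}/(e^{2\lambda\pi}+1)\,d\lambda$ are finite for every $k \geq 0$, so this produces an asymptotic expansion of the form $\pi/t + \sum_{k \geq 0} a_k t^k$.

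Multiplying by $\dim(V_\chi)\Vol(X)/(4\pi^2)$ and relabelling the powers gives
$$ \tr\bigl(e^{-tA_\chi^\sharp}\bigr) \sim \dim(V_\chi) \sum_{j = 0}^\infty c_j\, t^{(j-2)/2} \qquad (t \to 0^+), $$
with $c_j = 0$ for odd $j$. The main (and essentially only nontrivial) technical point is a uniform bound on the Taylor remainder of order $N$ of the form $O(t^{N})$; this follows from an elementary estimate on the remainder of $e^{-t\lambda^2}$ integrated against the rapidly decaying weight $\lambda/(e^{2\lambda\pi}+1)$. The rest is a direct computation.
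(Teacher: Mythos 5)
Your proposal is correct, but it takes a genuinely different route from the paper. The paper works on the ``geometric side'' directly: starting from the $\Gamma$-sum representation of the heat kernel $H^\chi_t(x,y)=\sum_{\gamma}H_t(\widetilde x,\gamma\widetilde y)\chi(\gamma)$, it isolates the $\gamma=e$ term, bounds the remaining sum by combining Donnelly's Gaussian off-diagonal estimate for $H_t$ on $\H^2$ with the length-growth bound \eqref{lenghtgrowthestimates} to get $O(t^{-1}e^{-c_0^2/4t})$, and invokes the standard local heat-kernel expansion (Gilkey) for the identity term before integrating over $X$. You instead take the Selberg trace formula \eqref{selberg3}, established just before the lemma, as the starting point and extract the asymptotics from its two sides: exponential smallness of the hyperbolic contribution from the explicit $e^{-l(\gamma)^2/4t}$ factors, and the power-series expansion of the Plancherel integral via $\tanh x=1-2/(e^{2x}+1)$ and the remainder bound for $e^{-t\lambda^2}$.

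Both are complete. Your argument is more explicit and computational, exploiting the closed form of the rank-one Plancherel density and in particular pinning down that $c_j=0$ for odd $j$; this is a pleasant simplification available precisely because we are in dimension two. The paper's argument is more geometric and is the one that scales: in higher dimensions the Plancherel polynomial becomes a more complicated Harish-Chandra $c$-function and the term-by-term computation is no longer as clean, whereas the local heat-expansion plus Gaussian off-diagonal decay machinery is essentially dimension-independent. One small point worth making explicit in your write-up: the uniform convergence of the hyperbolic sum on $t\in(0,T]$ after splitting off $e^{-l_0^2/8t}$ follows from monotonicity of $e^{-l(\gamma)^2/8t}$ in $t$ together with the length-growth estimate; you gesture at this but it is the place where the convergence really needs to be justified, and it is exactly the estimate the paper carries out in \eqref{convergence} in a slightly different guise.
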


	\begin{lem}
		Let $s_{1},s_{2}\in\C$ with $\RE((s_{1}-1/2)^2),\RE((s_{1}-1/2)^2)\gg0$. Then,
		\begin{multline}\label{resolvent}
			\tr\bigg(\big(A^{\sharp}_{\chi}+(s_{1}-1/2)^2\big)^{-1}-\big(A^{\sharp}_{\chi}+(s_{2}-1/2)^2\big)^{-1}\bigg)\\
			= \int_{0}^{\infty}(e^{-t(s_{1}-1/2)^2}-e^{-t(s_{2}-1/2)^2}) \tr(e^{-tA^{\sharp}_{\chi}})dt.
		\end{multline}
	\end{lem}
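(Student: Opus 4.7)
The content of the lemma is precisely the exchange of the trace with the time integral in the operator-valued identity
\[
\big(A^{\sharp}_{\chi}+(s_{1}-1/2)^2\big)^{-1}-\big(A^{\sharp}_{\chi}+(s_{2}-1/2)^2\big)^{-1}=\int_{0}^{\infty}\bigl(e^{-t(s_{1}-1/2)^2}-e^{-t(s_{2}-1/2)^2}\bigr) e^{-tA^{\sharp}_{\chi}}\,dt
\]
that has just been derived in the text preceding the statement. The plan is to verify that the trace-class-valued integrand on the right-hand side is Bochner integrable in the trace norm, i.e.
\[
\int_{0}^{\infty}\bigl|e^{-t(s_{1}-1/2)^2}-e^{-t(s_{2}-1/2)^2}\bigr|\cdot\bigl\|e^{-tA^{\sharp}_{\chi}}\bigr\|_{1}\,dt<\infty.
\]
Once this bound is in hand, the identity of the lemma follows because the trace is a continuous linear functional on the ideal of trace-class operators, and so commutes with the Bochner integral.

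To control the scalar factor, I would Taylor expand at $t=0$ to get $\bigl|e^{-t(s_1-1/2)^2}-e^{-t(s_2-1/2)^2}\bigr|\leq C\,t$ on $(0,1]$, while on $[1,\infty)$ both exponentials decay like $e^{-t\RE(s_j-1/2)^2}$, which is absolutely integrable since by assumption $\RE(s_j-1/2)^2\gg0$. For the operator factor I would use the semigroup property to factorise
\[
\bigl\|e^{-tA^{\sharp}_{\chi}}\bigr\|_{1}\leq \bigl\|e^{-(t/2)A^{\sharp}_{\chi}}\bigr\|_{2}^{2}=\int_{X\times X}\bigl|H_{t/2}^{\chi}(x,y)\bigr|^{2}\,dx\,dy,
\]
reducing matters to estimating the $L^{2}$-norm of the twisted heat kernel. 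Applying the decomposition \eqref{eq:HeatKernelAsSumOverGamma}, the identity ($\gamma=e$) term produces an $L^{2}$-norm of order $O(t^{-1})$ for small $t$ by Donnelly's pointwise bound \eqref{donelly} and a Gaussian-integral computation (the exponent of $t$ reflects $\dim X=2$), while the sum over $\gamma\neq e$ decays exponentially in $t$ by the same estimates \eqref{estimates}--\eqref{convergence} used in the proof of Lemma~\ref{trace}.

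For large $t$, the spectral properties of $\Delta^{\sharp}_{\chi}$ recalled in Subsection~\ref{sec:TwistedBLOperator} (discrete spectrum contained in a translate of a positive cone) imply that $\|e^{-tA^{\sharp}_{\chi}}\|_{\infty}$ is bounded, whence $\|e^{-tA^{\sharp}_{\chi}}\|_{1}\leq \|e^{-(t-1)A^{\sharp}_{\chi}}\|_{\infty}\cdot\|e^{-A^{\sharp}_{\chi}}\|_{1}$ is bounded uniformly in $t\geq 1$. Combined with the exponential smallness of the scalar factor, this makes the large-$t$ part of the integral finite, and the full integrability follows.

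The main technical obstacle is the small-$t$ bound on $\|e^{-tA^{\sharp}_{\chi}}\|_{1}$: since $A^{\sharp}_{\chi}$ is not self-adjoint, the asymptotic expansion of $\tr(e^{-tA^{\sharp}_{\chi}})$ from Lemma~\ref{trace} does not by itself control the trace norm, and one genuinely has to pass through the smooth kernel and invoke the Hilbert--Schmidt factorisation above, using that all of the relevant heat-kernel estimates have already been assembled in the proof of Lemma~\ref{trace}.
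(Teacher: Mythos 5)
Your proposal is correct, and it is a more careful argument than the one in the paper, which is considerably terser. The paper's proof only establishes (via Lemma~\ref{trace} and the Taylor expansion at $t=0$) that the scalar integrand $|e^{-t(s_1-1/2)^2}-e^{-t(s_2-1/2)^2}|\,|\tr(e^{-tA^\sharp_\chi})|$ is integrable on $(0,\infty)$, and then asserts that ``we can interchange summation and integration''; here ``summation'' refers to the eigenvalue expansion $\tr(e^{-tA^\sharp_\chi})=\sum_\mu m(\mu)e^{-t\mu}$ that one wants to swap past $\int_0^\infty$. Strictly speaking that Fubini step needs absolute convergence of $\sum_\mu m(\mu)\lvert e^{-t\mu}\rvert$ against the scalar factor, and this is controlled by the trace \emph{norm} $\lVert e^{-tA^\sharp_\chi}\rVert_1$ (via Weyl's inequality), not merely by the trace. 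This is exactly the point you identify as ``the main technical obstacle'', and your Hilbert--Schmidt factorisation $\lVert e^{-tA^\sharp_\chi}\rVert_1\le\lVert e^{-(t/2)A^\sharp_\chi}\rVert_2^2$ combined with the pointwise Donnelly/Gangolli estimates supplies precisely the missing trace-norm bound. So you are doing a Bochner integral in the trace-class ideal rather than a sum-vs.-integral swap, and in doing so you actually patch a gap that the paper leaves implicit. One small imprecision: $\lVert e^{-tA^\sharp_\chi}\rVert_\infty$ need not be uniformly bounded for large $t$, since $A^\sharp_\chi=\Delta^\sharp_\chi-\tfrac14$ may have finitely many eigenvalues with negative real part. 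The correct statement is that the semigroup is quasi-bounded, $\lVert e^{-tA^\sharp_\chi}\rVert_\infty\le Ce^{\omega t}$ for some $\omega\ge0$; this still suffices because the hypothesis $\RE((s_j-1/2)^2)\gg 0$ makes the scalar factor decay faster than $e^{-\omega t}$, so the large-$t$ part of the integral remains finite.
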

	\begin{proof}
		As $t\rightarrow\infty$, the integrand in the $\RHS$ of \eqref{resolvent},
		decays exponentially. As $t\rightarrow 0$, we use Lemma \ref{trace}
		and the Taylor series expansion of $e^{-t(s_{1}-1/2)^2}$ and $e^{-t(s_{2}-1/2)^2}$ to conclude that the integral converges absolutely. Hence, we can interchange summation and integration and \eqref{resolvent} follows.
	\end{proof}

	\begin{prop}[Resolvent trace formula]\label{resolventtrace}
		Let $s_{1},s_{2}\in\C$ with $\RE(s_1),\RE(s_2)\gg0$ and $\RE((s_{1}-1/2)^2),\RE((s_2-1/2)^2)\gg0$.
		Then, 
		\begin{multline}\label{finalresolvent}
			\tr\bigg(\big(A^{\sharp}_{\chi}+(s_{1}-1/2)^2\big)^{-1}-(A^{\sharp}_{\chi}+(s_{2}-1/2)^2\big)^{-1}\bigg)\\
			=\bigg(\frac{1}{4\pi^2}\dim(V_{\chi})\Vol(X)\bigg)\bigg(\int_{\R}\frac{\lambda\pi\tanh \lambda\pi}{(s_{1}-\frac{1}{2})^{2}+\lambda^{2}}
			-\frac{\lambda\pi\tanh\lambda\pi }{(s_{2}-\frac{1}{2})^{2}+\lambda^{2}}
			d\lambda\bigg)\\
			+\frac{1}{2(s_{1}-\frac{1}{2})}L(s_{1};\chi)
			-\frac{1}{2(s_{2}-\frac{1}{2})}L(s_{2};\chi).
		\end{multline}
	\end{prop}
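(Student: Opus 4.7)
The plan is to combine the previous lemma, which expresses the resolvent difference as a $t$-integral of $\tr(e^{-tA^\sharp_\chi})$, with the explicit trace formula \eqref{selberg3} (after subtracting $\tfrac14$), and then to evaluate the resulting double integrals term by term. Concretely, I would substitute \eqref{selberg3} into \eqref{resolvent}, and split the right-hand side into an identity part coming from the $\lambda$-integral of $\lambda\pi\tanh\lambda\pi$ and a hyperbolic part coming from the sum over $[\gamma]\neq e$.

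For the identity contribution, after interchanging the $t$- and $\lambda$-integrations (which I justify by noting that the $\lambda$-integrand is bounded by $|\lambda|e^{-t\lambda^2}$ times a polynomial and that $\int_0^\infty(e^{-t(s_1-1/2)^2}-e^{-t(s_2-1/2)^2})e^{-t\lambda^2}\,dt$ is absolutely integrable in $\lambda$ for $\RE((s_j-1/2)^2)\gg 0$), I use the elementary identity
\begin{equation*}
\int_0^\infty \bigl(e^{-t(s_1-1/2)^2}-e^{-t(s_2-1/2)^2}\bigr)e^{-t\lambda^2}\,dt = \frac{1}{(s_1-\tfrac12)^2+\lambda^2}-\frac{1}{(s_2-\tfrac12)^2+\lambda^2},
\end{equation*}
which reproduces exactly the first bracket on the right-hand side of \eqref{finalresolvent}.

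For the hyperbolic contribution, each summand produces (after swapping sum and integral) the classical Gaussian integral
\begin{equation*}
\int_0^\infty \frac{1}{\sqrt{4\pi t}}\,e^{-t(s-1/2)^2-l(\gamma)^2/(4t)}\,dt = \frac{1}{2(s-\tfrac12)}\,e^{-(s-\tfrac12)l(\gamma)},
\end{equation*}
valid for $\RE(s-\tfrac12)>0$, applied with $s=s_1$ and $s=s_2$. Inserting this into the hyperbolic sum and pulling $\tfrac{1}{2(s_j-1/2)}$ out, the remaining series is
\begin{equation*}
\sum_{[\gamma]\neq e}\frac{l(\gamma)\tr\chi(\gamma)}{n_\Gamma(\gamma)\,2\sinh(l(\gamma)/2)}\,e^{-(s_j-\tfrac12)l(\gamma)},
\end{equation*}
which is precisely $L(s_j;\chi)$ by Lemma~\ref{selberglog}. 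The two boundary contributions combine to give $\frac{1}{2(s_1-1/2)}L(s_1;\chi)-\frac{1}{2(s_2-1/2)}L(s_2;\chi)$.

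The main technical obstacle is justifying the two interchanges of integration and summation in the hyperbolic term: I must check absolute convergence of the double sum/integral uniformly for $\RE(s_j)$ sufficiently large. Here I would use the estimate $|\tr\chi(\gamma)|\leq c'e^{cl(\gamma)}$ from \eqref{eq:EstimateTraceChi}, the length-growth bound \eqref{lenghtgrowthestimates}, and the Gaussian decay in $l(\gamma)^2/(4t)$, combined with the exponential small-$t$ bound \eqref{eq:ExponentialDecayNonIdentityContributionHeatKernel} that was already established in Lemma~\ref{trace}. Once this is in place, Fubini applies and the two explicit one-variable integrals above yield \eqref{finalresolvent} directly; the identity-contribution interchange is easier because the $\lambda$-integrand is an entire function of $\lambda$ with controlled growth. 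Meromorphic continuation in $s_1,s_2$ of both sides is then automatic from the identity.
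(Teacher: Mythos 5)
Your proposal follows essentially the same route as the paper's proof: substitute the explicit heat trace formula \eqref{selberg3} into the resolvent integral \eqref{resolvent}, split into identity and hyperbolic contributions, interchange the $t$- and $\lambda$-integrals and compute the inner $t$-integral for the identity term, and apply the classical Gaussian/Laplace integral $\int_0^\infty \frac{1}{\sqrt{4\pi t}}e^{-t(s-\frac12)^2-l(\gamma)^2/(4t)}\,dt = \frac{1}{2(s-\frac12)}e^{-(s-\frac12)l(\gamma)}$ followed by Lemma~\ref{selberglog} for the hyperbolic term. The justification of the interchanges via \eqref{eq:EstimateTraceChi}, the length-growth bound, and Gaussian decay matches the paper's reasoning, so no substantive difference.
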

	
	\begin{proof}
		We substitute  $\tr(e^{-tA^{\sharp}_{\chi}})$ 
		in the $\RHS$ of \eqref{resolvent}
		with the $\RHS$ of \eqref{selberg3}.
		Then, the $\RHS$ of \eqref{resolvent} is equal to 
		\begin{equation}\label{rhs}
			\RHS(\eqref{resolvent})=I(s_{1},s_{2};\chi)+H(s_{1},s_{2};\chi),
		\end{equation}
		where
		\begin{multline*}
			I(s_{1},s_{2};\chi) = \frac{1}{4\pi^2}\dim(V_{\chi})\Vol(X)
			\int_{0}^{\infty}\Big(e^{-t(s_{1}-1/2)^2}-e^{-t(s_{2}-1/2)^2}\Big)\\
			\times\bigg(\int_{\R}e^{-t\lambda^2}\lambda\pi\tanh\lambda\pi\,d\lambda\bigg)dt
		\end{multline*}
		and
		\begin{multline*}
			H(s_{1},s_{2};\chi) = \int_{0}^{\infty}\frac{1}{2\sqrt{4\pi t}}\Big(e^{-t(s_{1}-1/2)^2}-e^{-t(s_{2}-1/2)^2}\Big)\\
			\times\bigg(\sum_{[\gamma]\neq e} \frac{l(\gamma)\tr(\chi(\gamma))}{n_{\Gamma}(\gamma)\sinh(l(\gamma)/2)}
			e^{-\frac{l(\gamma)^{2}}{4t}}\bigg)dt.
		\end{multline*}
		We first compute $I(s_{1},s_{2};\chi)$.
		Substituting $\lambda'=\lambda\sqrt{t}$ and using $|\tanh(x)|\leq 1$, we find
		\begin{multline*}
			\int_{\R}\left|\bigg(e^{-t(s_{1}-1/2)^2}-e^{-t(s_{2}-1/2)^2}\bigg)e^{-t\lambda^2}\lambda\pi\tanh\lambda\pi\right|\,d\lambda\\
			\leq \left|\frac{e^{-t(s_{1}-1/2)^2}-e^{-t(s_{2}-1/2)^2}}{t}\right|\pi\int_0^\infty \lambda' e^{-(\lambda')^2}\,d\lambda',
		\end{multline*}
		where the latter integral over $\lambda'$ is finite and independent of $t$ and the factor in front is integrable over $t\in(0,\infty)$ by the Taylor expansion of $e^{-t(s_j-1/2)^{2}}$. Hence, we can interchange the integrals in the expression of $I(s_{1},s_{2};\chi)$ and compute the inner integral over $t$ to obtain
		\begin{multline}\label{id}
			I(s_{1},s_{2};\chi)=\frac{1}{4\pi^2}\dim(V_{\chi})\Vol(X)\\
			\times\int_{\R}\bigg(\frac{\lambda\pi\tanh\lambda\pi}{(s_{1}-\frac{1}{2})^{2}+\lambda^{2}}-\frac{\lambda\pi\tanh\lambda\pi}{(s_{2}-\frac{1}{2})^{2}+\lambda^{2}}\bigg) d\lambda.
		\end{multline}
		For computing $H(s_{1},s_{2};\chi)$, we use
		\begin{equation*}
			\int_{0}^{\infty} \frac{1}{\sqrt{4\pi t}}e^{-t(s-\frac{1}{2})^2}
			e^{-\frac{l(\gamma)^{2}}{4t}}\,dt=\frac{1}{2(s-\frac{1}{2})}
			e^{-(s-\frac{1}{2})l(\gamma)},
		\end{equation*}
		for $\RE(s-\frac{1}{2}),\RE((s-\frac{1}{2})^2)>0$ (\cite[(27), p. 146]{Er}).
		Hence,
		\begin{multline*}
			H(s_{1},s_{2};\chi)=
			\bigg(\frac{1}{2(s_{1}-\frac{1}{2})}\sum_{[\gamma]\neq e} \frac{l(\gamma)\tr(\chi(\gamma))}{2n_{\Gamma}(\gamma)\sinh(l(\gamma)/2)}
			e^{-(s_{1}-\frac{1}{2})l(\gamma)}\\
			-\frac{1}{2(s_{2}-\frac{1}{2})}\sum_{[\gamma]\neq e} \frac{l(\gamma)\tr(\chi(\gamma))}{2n_{\Gamma}(\gamma)\sinh(l(\gamma)/2)}
			e^{-(s_{2}-\frac{1}{2})l(\gamma)}\bigg).
		\end{multline*}
		By \eqref{logder}, we get
		\begin{equation}\label{hyp}
			H(s_{1},s_{2};\chi)=
			\frac{1}{2(s_{1}-\frac{1}{2})}L(s_{1};\chi)
			-\frac{1}{2(s_{2}-\frac{1}{2})}L(s_{2};\chi).
		\end{equation}
		The claim now follows by putting together \eqref{rhs}, \eqref{id} and \eqref{hyp}.
	\end{proof}
	
	\begin{prop}\label{merolog}
		The logarithmic derivative $L(s;\chi)$ of the Selberg zeta function $Z(s;\chi)$ extends to a meromorphic function in $s\in\C$ with singularities given by the following formal expression:
		\begin{equation}
			\sum_{j=0}^\infty\left[\frac{1}{s-\frac{1}{2}-i\mu_j}+\frac{1}{s-\frac{1}{2}+i\mu_j}\right] + \frac{\Vol(X)\dim(V_\chi)}{2\pi}\sum_{k=0}^\infty\frac{1+2k}{s+k}.\label{eq:merologFormalSingularities}
		\end{equation}
		where $(\lambda_j=\frac{1}{4}+\mu_j^2)_{j\in\Z^+}\subseteq\C$ are the eigenvalues of $\Delta_\chi^\sharp$ counted with algebraic multiplicity.
	\end{prop}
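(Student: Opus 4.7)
The plan is to invert the resolvent trace formula from Proposition~\ref{resolventtrace} to express $L(s;\chi)$ in terms of manifestly meromorphic objects. Fixing once and for all a reference point $s_0$ with $\RE((s_0-\tfrac{1}{2})^2)\gg 0$ so that $L(s_0;\chi)$ is given by the absolutely convergent series \eqref{logder}, I solve \eqref{finalresolvent} for $L(s;\chi)$:
\begin{equation*}
L(s;\chi) = 2(s-\tfrac{1}{2})\tr\bigl(R_s - R_{s_0}\bigr) - \frac{\dim(V_\chi)\Vol(X)(s-\tfrac{1}{2})}{2\pi^2}\bigl(I(s) - I(s_0)\bigr) + \frac{s-\tfrac{1}{2}}{s_0-\tfrac{1}{2}}L(s_0;\chi),
\end{equation*}
where $R_s = (A^\sharp_\chi + (s-\tfrac{1}{2})^2)^{-1}$ and $I(s)$ denotes the $\tanh$-integral appearing in \eqref{finalresolvent}. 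The remaining task splits into two meromorphic continuations: one for $\tr(R_s - R_{s_0})$ and one for $I(s) - I(s_0)$.

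For the spectral term I would invoke Lidskii's theorem together with the generalized eigenspace decomposition of $A^\sharp_\chi$ to write
\begin{equation*}
\tr(R_s - R_{s_0}) = \sum_{j=0}^\infty \left[\frac{1}{\mu_j^2 + (s-\tfrac{1}{2})^2} - \frac{1}{\mu_j^2 + (s_0-\tfrac{1}{2})^2}\right],
\end{equation*}
where eigenvalues $\lambda_j = \tfrac{1}{4}+\mu_j^2$ are listed with algebraic multiplicity. Weyl-type growth of the eigenvalues, implicit in the heat-trace expansion of Lemma~\ref{trace}, ensures absolute and locally uniform convergence on $\C$ minus the points $s=\tfrac{1}{2}\pm i\mu_j$, which are the only singularities of the series. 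Multiplying by $2(s-\tfrac{1}{2})$ and using the partial-fraction identity $\tfrac{2(s-1/2)}{(s-1/2)^2+\mu_j^2} = \tfrac{1}{s-1/2-i\mu_j}+\tfrac{1}{s-1/2+i\mu_j}$ produces exactly the first sum in \eqref{eq:merologFormalSingularities}.

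For the $\tanh$-integral the idea is to insert the Mittag--Leffler expansion
\begin{equation*}
\pi\tanh(\pi\lambda) = \sum_{k=0}^\infty \frac{2\lambda}{\lambda^2+(k+\tfrac{1}{2})^2}
\end{equation*}
into the difference $I(s) - I(s_0)$ and exchange summation and integration. Since each summand is nonnegative and $\tfrac{1}{(s-1/2)^2+\lambda^2} - \tfrac{1}{(s_0-1/2)^2+\lambda^2}$ decays like $|\lambda|^{-4}$, an absolute-convergence argument justifies the exchange, and the inner integrals are elementary contour integrals:
\begin{equation*}
\int_\R \frac{2\lambda^2}{(\lambda^2+(k+\tfrac{1}{2})^2)(\lambda^2+(s-\tfrac{1}{2})^2)}\,d\lambda = \frac{2\pi}{s+k}.
\end{equation*}
Summing yields $I(s) - I(s_0) = 2\pi\sum_{k\geq 0}\bigl[\tfrac{1}{s+k} - \tfrac{1}{s_0+k}\bigr] = 2\pi\bigl(\psi(s_0) - \psi(s)\bigr)$, so the digamma function $\psi$ enters; its simple poles at $s=-k$ of residue $-1$, combined with the prefactor $(s-\tfrac{1}{2})\big|_{s=-k} = -(2k+1)/2$, generate precisely the second sum in \eqref{eq:merologFormalSingularities}.

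The main obstacle is the justification of the Mittag--Leffler substitution and Fubini exchange: the individual integrals $I(s)$ diverge and only the difference makes sense as a convergent integral, so one cannot sum the series termwise against the bare kernel $\tfrac{1}{(s-1/2)^2+\lambda^2}$ without first forming the difference and identifying an integrable majorant. Once this analytic step is in place, the remainder is bookkeeping of residues.
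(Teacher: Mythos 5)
Your proposal is correct and follows essentially the same route as the paper: both start from the resolvent trace formula of Proposition~\ref{resolventtrace}, fix one variable (your $s_0$, the paper's $s_2$), multiply through by $2(s-\frac{1}{2})$, expand $\pi\lambda\tanh\pi\lambda$ by Mittag--Leffler, exchange summation and integration, and evaluate the inner integrals by residues to produce the two families of poles. The only cosmetic differences are that you package the identity contribution via the digamma function and spell out the Lidskii/generalized-eigenspace justification for the spectral side, which the paper leaves implicit in the phrase ``the spectral side of the resolvent trace formula.''
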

	
	\begin{proof}
		Consider the resolvent trace formula~\eqref{finalresolvent} for fixed $s_{2}\in\C$ with $\RE((s_2-\frac{1}{2})^2)\gg0$ and let $s=s_{1}\in\C$ vary. Multiplying both sides of \eqref{finalresolvent} with $2(s-\frac{1}{2})$ we see that $L(s;\chi)$ is meromorphic if the LHS of \eqref{finalresolvent} and $I(s,s_2;\chi)$ are meromorphic in $s\in\C$.
		
		The LHS of \eqref{finalresolvent}, that is the spectral side of the resolvent trace formula,
		is meromorphic since the resolvent $(A_\chi^\sharp+(s-1/2)^2)^{-1}$ is meromorphic, and the simple poles where $-(s-1/2)^2$ equals an eigenvalue of $A_\chi^\sharp$ give exactly the singularities of the first term in \eqref{eq:merologFormalSingularities}.
		
		In order to establish meromorphicity of $I(s,s_2;\chi)$ and obtain the singularities of the second term in \eqref{eq:merologFormalSingularities}, we recall the expression \eqref{id} and use the following identity (see e.g. \cite[p. 401]{Knapp}):
		\begin{equation*}
			\pi\lambda\tanh\pi\lambda = \sum_{\substack{n=-\infty\\ n\text{ odd}}}^{+\infty} \frac{\lambda^2}{\big(\frac{n}{2}\big)^{2}+\lambda^2}.
		\end{equation*}
		Then, we get
		\begin{multline}\label{idmer}
			2\Big(s-\frac{1}{2}\Big)I(s,s_{2};\chi)=\bigg(\frac{1}{4\pi^2}\dim(V_{\chi})\Vol(X)\bigg)\cdot\bigg(2(s-\frac{1}{2})\bigg)\\
			\times\bigg[\int_{\R}\bigg(\frac{1}{(s-\frac{1}{2})^{2}+\lambda^{2}}
			-\frac{1}{(s_{2}-\frac{1}{2})^{2}+\lambda^{2}}\bigg)
			\bigg(\sum_{\substack{n=-\infty\\ n\text{ odd}}}^{+\infty} \frac{\lambda^2}
			{\big(\frac{n}{2}\big)^{2}+\lambda^2}\bigg) d\lambda\bigg].
		\end{multline}
		Since one can interchange summation and integration on the 
		$\RHS$ of the equation above, we obtain the following expression
		\begin{equation*}
			I:=\sum_{\substack{n=-\infty\\n\text{ odd}}}^{+\infty}\Bigg( \int_{\R}\frac{\lambda^2}
			{\big((s-\frac{1}{2})^{2}+\lambda^{2}\big)\big((\frac{n}{2})^{2}+\lambda^2\big)}-\frac{\lambda^2} {\big((s_2-\frac{1}{2})^{2}+\lambda^{2}\big)\big((\frac{n}{2})^{2}+\lambda^2\big)} d\lambda\Bigg).
		\end{equation*}
		To compute the integral, note that for $a^2\neq b^2$ we can write
		$$ \frac{\lambda^2}{(a^2+\lambda^2)(b^2+\lambda^2)} = \frac{1}{a^2-b^2}\left(\frac{a^2}{a^2+\lambda^2}-\frac{b^2}{b^2+\lambda^2}\right) $$
		and use the following integral formula,  which is easily derived using the residue theorem:
		$$ \int_\R \frac{1}{a^2+\lambda^2}d\lambda = \frac{\pi}{a} \qquad \mbox{for }\RE (a)>0. $$
		This yields
		$$ I = \sum_{\substack{n=-\infty\\n\text{ odd}}}^{+\infty}\Bigg(\frac{\pi}{s+\frac{|n|-1}{2}}-\frac{\pi}{s_2+\frac{|n|-1}{2}}\Bigg) = \sum_{\substack{n=-\infty\\n\text{ odd}}}^{+\infty}\frac{\pi(s_2-s)}{(s+\frac{|n|-1}{2})(s_2+\frac{|n|-1}{2})}, $$
		and hence, \eqref{idmer} gives
		\begin{equation}\label{idmer2}
			2\Big(s-\frac{1}{2}\Big)I(s;s_{2},\chi)= \frac{1}{2\pi}\dim(V_{\chi})\Vol(X)\sum_{\substack{n=-\infty\\ n\text{ odd}}}^{+\infty} \frac{(s_2-s)(s-\frac{1}{2})}{(s+\frac{|n|-1}{2})(s_2+\frac{|n|-1}{2})}.
		\end{equation}
		The result now follows by substituting $k=\frac{|n|-1}{2}\in\Z^+$.
	\end{proof}
	
	\begin{thm}\label{selbergmero}
		Let $X=\Gamma\backslash \H^2$ be a compact hyperbolic surface and let $\chi\colon \Gamma\rightarrow \GL(V_{\chi})$ be a finite-dimensional, complex representation of $\Gamma$. Then, the twisted Selberg zeta function $Z(s;\chi)$ admits a holomorphic continuation to $\C$ with zeros given by the following formal product:
		$$ \prod_{j=0}^\infty\left(s-\frac{1}{2}-i\mu_j\right)\left(s-\frac{1}{2}+i\mu_j\right)\prod_{k=0}^\infty(s+k)^{(2g-2)\dim(V_\chi)(1+2k)}. $$
	\end{thm}
	
	\begin{proof}
		The theorem follows from Proposition~\ref{merolog} by integration and exponentiation. We only need to prove that the residues of $L(s;\chi)$ are positive integers. This follows from \eqref{eq:merologFormalSingularities} and the Gauss--Bonnet formula 
		\begin{equation*}
			\frac{\Vol(X)}{2\pi}= 2g-2,
		\end{equation*}
		where $g\geq2$ is the genus of the surface.
	\end{proof}
	
	\begin{rmrk}
		Since the twisted Bochner--Laplace operator $\Delta_\chi^\sharp$ is in general not self-adjoint, the eigenvalues $\lambda_j=\frac{1}{4}+\mu_j^2$ need not be real and $>0$ and hence, some of the zeros $\frac{1}{2}\pm i\mu_j$ and $-k$ of $Z(s;\chi)$ could coincide. Therefore, it is not possible in general to deduce more precise information about the degree of the zeros of $Z(s;\chi)$ from Theorem~\ref{selbergmero}.
	\end{rmrk}
	
	\begin{coro}\label{cor:RuelleMeromorphic}
		The twisted Ruelle zeta function $R(s;\chi)$ admits a meromorphic continuation to $\C$ with zeros and singularities given by the following formal product:
		$$ \prod_{j=0}^\infty\frac{(s-\frac{1}{2}-i\mu_j)(s-\frac{1}{2}+i\mu_j)}{(s+\frac{1}{2}-i\mu_j)(s+\frac{1}{2}+i\mu_j)}\times s^{(2g-2)\dim(V_\chi)}\prod_{k=1}^\infty(s+k)^{2(2g-2)\dim(V_\chi)}. $$
		In particular, $R(s;\chi)$ has a zero of order $(2g-2)\dim(V_\chi)$ at $s=0$.
	\end{coro}
	\begin{proof}
		The statement follows from Theorem \ref{selbergmero} and Lemma \ref{ruelleselberg}.
	\end{proof}
	
	\begin{thm}\label{selbergfe}
		The twisted Selberg zeta function satisfies the following functional equation.
		\begin{equation}\label{fefinal}
			\eta(s;\chi) = \frac{Z(s;\chi)}{Z(1-s;\chi)} = \exp\bigg[\dim(V_{\chi})\Vol(X)
			\int_0^{s-\frac{1}{2}}r\tan\pi r\,dr\bigg],
		\end{equation}
		where the integral is a complex line integral along any curve from $0$ to $s-\frac{1}{2}$.
	\end{thm}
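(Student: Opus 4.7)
The plan is to establish the functional equation by proving the identity
\begin{equation*}
L(s;\chi) + L(1-s;\chi) = \dim(V_\chi)\Vol(X)\cdot(s-\tfrac{1}{2})\tan\pi(s-\tfrac{1}{2}),
\end{equation*}
and then integrating from $s=1/2$; the constant of integration vanishes because $\eta(1/2;\chi)=1$, and the substitution $r = \tau-1/2$ yields the stated formula. Observe that $\tfrac{d}{ds}\log\eta(s;\chi) = L(s;\chi) + L(1-s;\chi)$, since $\tfrac{d}{ds}\log Z(1-s;\chi) = -L(1-s;\chi)$.

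To prove the displayed identity, I will use the resolvent trace formula \eqref{finalresolvent} twice. Rearranging \eqref{finalresolvent} yields, as a meromorphic identity in $s$,
\begin{equation*}
L(s;\chi) = 2(s-\tfrac{1}{2})\bigl[\tr(R(s) - R(s_2)) - I(s,s_2;\chi)\bigr] + \tfrac{s-1/2}{s_2-1/2}L(s_2;\chi),
\end{equation*}
where $R(s) := (A_\chi^\sharp + (s-1/2)^2)^{-1}$. Since $R(s)$ depends on $s$ only through $(s-1/2)^2$, the resolvent trace is invariant under $s\mapsto 1-s$. Applying the identity to $1-s$ in place of $s$ and adding, the $\tr(R-R)$ and $L(s_2;\chi)$ contributions cancel, leaving
\begin{equation*}
L(s;\chi) + L(1-s;\chi) = 2(s-\tfrac{1}{2})\bigl[I(1-s,s_2;\chi) - I(s,s_2;\chi)\bigr].
\end{equation*}

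The remaining task is to evaluate this difference explicitly. Dividing \eqref{idmer2} by $2(s-1/2)$ and re-indexing the sum over odd $n$ by $k = \tfrac{|n|-1}{2}$ (so that each $k\geq 0$ is hit twice, by $n = \pm(2k+1)$) yields
\begin{equation*}
I(s,s_2;\chi) = \frac{\dim(V_\chi)\Vol(X)}{2\pi}\sum_{k=0}^\infty\left[\frac{1}{s+k} - \frac{1}{s_2+k}\right] = \frac{\dim(V_\chi)\Vol(X)}{2\pi}\bigl[\psi(s_2)-\psi(s)\bigr],
\end{equation*}
where $\psi$ denotes the digamma function. Subtracting the corresponding expression at $1-s$ cancels the $s_2$-terms, and the reflection formula $\psi(1-s)-\psi(s) = \pi\cot\pi s$ produces
\begin{equation*}
I(1-s,s_2;\chi) - I(s,s_2;\chi) = -\tfrac{\dim(V_\chi)\Vol(X)}{2}\cot\pi s.
\end{equation*}
Multiplying by $2(s-1/2)$ and invoking the trigonometric identity $\cot\pi s = -\tan\pi(s-1/2)$ gives the required formula.

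The main subtle point is the meromorphic continuation of $I(s,s_2;\chi)$: the integrand in \eqref{id} is manifestly a function of $(s-1/2)^2$, so naively one might conclude $I(s,s_2;\chi) = I(1-s,s_2;\chi)$, which would yield $L(s;\chi)+L(1-s;\chi) = 0$ and contradict the claim. The resolution is that the integral is only conditionally convergent—regularized by the subtraction of the $s_2$-term—and its meromorphic extension in $s$ starting from the convergent region $\RE s > 1/2$ (in which \eqref{finalresolvent} is originally derived from convergent series for $L$) is the digamma-type expression above, which is genuinely \emph{not} invariant under $s\mapsto 1-s$. It is precisely this asymmetry, contrasted with the true $s\leftrightarrow 1-s$ symmetry of $\tr(R(s)-R(s_2))$, that produces the nontrivial elementary factor on the right-hand side of the functional equation.
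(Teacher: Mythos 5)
Your proof is correct and follows essentially the same route as the paper: apply the resolvent trace formula \eqref{finalresolvent} at $s$ and at $1-s$, exploit the manifest invariance of the spectral side $\tr\bigl(R(s)-R(s_2)\bigr)$ under $s\mapsto 1-s$ to cancel it, and evaluate the resulting difference of identity contributions. The paper finishes with the classical partial-fraction series $\pi\tan\pi x=\sum_{k\geq 0}\bigl(\tfrac{1}{k+\frac12-x}-\tfrac{1}{k+\frac12+x}\bigr)$ rather than the digamma reflection formula, but these are the same identity; you also make explicit the choice of integration constant via $\eta(\tfrac12;\chi)=1$, which the paper leaves implicit.

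One small correction to the aside in your final paragraph: after the $s_2$-subtraction the integrand in \eqref{id} is $O(|\lambda|^{-3})$, so the integral is in fact \emph{absolutely} convergent (not merely conditionally so), and as a function on $\RE s\neq\tfrac12$ it genuinely depends on $s$ only through $(s-\tfrac12)^2$ and is therefore invariant under $s\mapsto 1-s$. The real source of the asymmetry is that the evaluation $\int_\R(a^2+\lambda^2)^{-1}\,d\lambda=\pi/a$ requires choosing the square root $a$ of $(s-\tfrac12)^2$ with $\RE a>0$; on $\RE s>\tfrac12$ this forces $a=s-\tfrac12$ and yields the digamma formula, but the meromorphic continuation of that formula to $\RE s<\tfrac12$ no longer agrees with the integral there (where the correct root is $\tfrac12-s$). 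Since your argument only uses the meromorphic extension from $\RE s>\tfrac12$ together with the $s\leftrightarrow 1-s$ invariance of the trace, the imprecision does not affect the proof.
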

	
	We remark that the value of the integral in \eqref{fefinal} depends on the chosen curve from $0$ to $s-\frac{1}{2}$, but the RHS of \eqref{fefinal} does not, because the residues of $r\tan\pi r$ are integer multiples of $\frac{1}{2\pi}$ and $\dim(V_\chi)\Vol(X)=2\pi(2g-2)\dim(V_\chi)$.
	
	\begin{proof}
		Write again $s=s_1$ and consider the transform $s\mapsto 1-s$ in \eqref{finalresolvent}. Then, the LHS of \eqref{finalresolvent} remains invariant under this transform.
		The $\RHS$ of \eqref{finalresolvent} is as in \eqref{rhs}, where
		$I(s,s_{2};\chi)$ is now given by \eqref{idmer2} and $H(s,s_{2};\chi)$ is given by \eqref{hyp}. (Note that we cannot use the identity \eqref{id} for $I(s,s_2;\chi)$ here since it is only valid for $\RE(s-\frac{1}{2})\gg0$ and hence only holds for $s$ and not for $1-s$.)
		Thus, if we consider the transform $s\mapsto 1-s$, then the $\RHS$ of \eqref{finalresolvent} will give 
		\begin{align}\label{fid}
			I(1-s;s_{2},\chi)=\frac{1}{4\pi}\dim(V_{\chi})\Vol(X)\sum_{\substack{n=-\infty\\ n\text{ odd}}}^{+\infty} \frac{s+s_2-1}{(1-s+\frac{|n|-1}{2})(s_2+\frac{|n|-1}{2})}
		\end{align}
		and
		\begin{equation}\label{fh}
			H(1-s;s_{2},\chi)=-\frac{1}{2(s-\frac{1}{2})}L(1-s;\chi)
			-\frac{1}{2(s_{2}-\frac{1}{2})}L(s_{2};\chi).
		\end{equation}
		Hence, by the above observation, \eqref{fid}
		and \eqref{fh}, we have
		\begin{equation*}
			H(s;s_{2},\chi)-H(1-s;s_{2},\chi)=-I(s;s_{2},\chi)+I(1-s;s_{2},\chi),
		\end{equation*}
		which further gives
		\begin{align*}
			\frac{L(s;\chi)+L(1-s;\chi)}{2(s-\frac{1}{2})} &= \frac{1}{4\pi}\dim(V_{\chi})\Vol(X)
			\sum_{\substack{n=-\infty\\ n\text{ odd}}}^{+\infty} \frac{2s-1}{(s+\frac{|n|-1}{2})(1-s+\frac{|n|-1}{2})}\\
			&= \frac{1}{2\pi}\dim(V_{\chi})\Vol(X)\sum_{k=0}^\infty \frac{2s-1}{(s+k)(1-s+k)}\\
			&= \frac{1}{2\pi}\dim(V_{\chi})\Vol(X)\sum_{k=0}^\infty\left(\frac{1}{1-s+k}-\frac{1}{s+k}\right).
		\end{align*}
		Using the classical identity
		$$ \pi\tan\pi x = 2x\sum_{k=0}^\infty\frac{1}{(k+\frac{1}{2})^2-x^2} = \sum_{k=0}^\infty\left(\frac{1}{k+\frac{1}{2}-x}-\frac{1}{k+\frac{1}{2}+x}\right) $$
		this can be rewritten as
		\begin{equation}\label{fe}
			L(s;\chi)+L(1-s;\chi)
			=\dim(V_{\chi})\Vol(X)\left(s-\frac{1}{2}\right)\tan\pi\left(s-\frac{1}{2}\right).
		\end{equation}
		We integrate both sides of \eqref{fe} over $s$ and exponentiate the result to get \eqref{fefinal}.
	\end{proof}
	
	\begin{coro}
		The twisted Ruelle zeta function satisfies the following functional equation
		\begin{equation}\label{rufe2}
			R(s;\chi)R(-s;\chi)=(2\sin\pi s)^{2(2g-2)\dim V_\chi}.
		\end{equation}
	\end{coro}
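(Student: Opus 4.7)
The strategy is to reduce the statement for the Ruelle zeta function to the already-established functional equation for the Selberg zeta function (Theorem~\ref{selbergfe}), via the multiplicative relation $R(s;\chi)=Z(s;\chi)/Z(s+1;\chi)$ from Lemma~\ref{ruelleselberg}. Concretely, I would first write
\begin{equation*}
R(s;\chi)R(-s;\chi) = \frac{Z(s;\chi)}{Z(s+1;\chi)}\cdot\frac{Z(-s;\chi)}{Z(1-s;\chi)},
\end{equation*}
and then recognize that this factors cleanly into two applications of the Selberg functional equation: the pair $Z(s;\chi)/Z(1-s;\chi)$ equals $\eta(s;\chi)$, and the pair $Z(-s;\chi)/Z(1-(-s);\chi) = Z(-s;\chi)/Z(s+1;\chi)$ equals $\eta(-s;\chi)$. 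Thus $R(s;\chi)R(-s;\chi)=\eta(s;\chi)\eta(-s;\chi)$.

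Next I would invoke \eqref{fefinal} to rewrite the right-hand side as an exponential of a sum of two integrals,
\begin{equation*}
\eta(s;\chi)\eta(-s;\chi) = \exp\bigg[\dim(V_\chi)\Vol(X)\bigg(\int_0^{s-\frac{1}{2}}r\tan\pi r\,dr + \int_0^{-s-\frac{1}{2}}r\tan\pi r\,dr\bigg)\bigg].
\end{equation*}
Since $r\mapsto r\tan\pi r$ is an even function, the change of variables $r\mapsto -r$ in the second integral yields
\begin{equation*}
\int_0^{-s-\frac{1}{2}}r\tan\pi r\,dr = -\int_0^{s+\frac{1}{2}}r\tan\pi r\,dr,
\end{equation*}
and therefore the two integrals combine to $-\int_{s-1/2}^{s+1/2}r\tan\pi r\,dr$, which gives precisely the desired expression~\eqref{rufe2}.

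There is essentially no obstacle here: the work has already been done in proving Theorem~\ref{selbergfe}, and all that remains is a careful bookkeeping with the $Z$-factors and one change of variables. The only mild point to note is that the manipulation is a priori valid only when $s$ avoids the singularities of both $R(s;\chi)$ and $R(-s;\chi)$, but since both sides of \eqref{rufe2} are meromorphic on $\C$ (by Corollary~\ref{cor:RuelleMeromorphic} and the analyticity of the integrand away from the poles of $\tan\pi r$), the identity extends by meromorphic continuation.
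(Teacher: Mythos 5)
Your proof is correct and follows exactly the same route as the paper: rewrite $R(s;\chi)R(-s;\chi)$ as a quotient of four $Z$-factors via Lemma~\ref{ruelleselberg}, regroup to get $\eta(s;\chi)\eta(-s;\chi)$, and then apply the functional equation~\eqref{fefinal} at $s$ and $-s$. The paper states the final step more tersely, but the evenness of $r\tan\pi r$ and the resulting change of variables you spell out are precisely the computation it leaves implicit.
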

	
	\begin{proof}
		By Lemma \ref{ruelleselberg}, we have
		\begin{equation}\label{rufe1}
			R(s;\chi)R(-s;\chi)= \frac{Z(s;\chi)}{Z(1+s;\chi)}\frac{Z(-s;\chi)}{Z(1-s;\chi)}.
		\end{equation}
		Applying \eqref{fefinal} for $s$ and $-s$ shows:
		\begin{equation*}
			R(s;\chi)R(-s;\chi)=\exp\bigg[-\dim(V_{\chi})\Vol(X)\int_{s-\frac{1}{2}}^{s+\frac{1}{2}}r\tan\pi r\,dr\bigg],
		\end{equation*}
		where the integral is a complex line integral along any curve from 0 to $s-\frac{1}{2}$. To compute the integral, note that
		\begin{align*}
			\frac{d}{ds}\int_{s-\frac{1}{2}}^{s+\frac{1}{2}}r\tan\pi r\,dr &= \left(s+\frac{1}{2}\right)\tan\pi\left(s+\frac{1}{2}\right)-\left(s-\frac{1}{2}\right)\tan\pi\left(s-\frac{1}{2}\right)\\
			&= -\left(s+\frac{1}{2}\right)\cot\pi s+\left(s-\frac{1}{2}\right)\cot\pi s\\
			&= -\cot\pi s = -\frac{1}{\pi}\frac{d}{ds}\ln\sin\pi s.
		\end{align*}
		This implies
		\begin{equation}\label{eq:FunctEqRuelleUpToConstant}
			\exp\left[-2\pi\int_{s-\frac{1}{2}}^{s+\frac{1}{2}}r\tan\pi r\,dr\right] = c\cdot\sin^2\pi s
		\end{equation}
		for some constant $c\in\C$. To find $c$, we set $s=\epsilon$ in \eqref{eq:FunctEqRuelleUpToConstant} with $\epsilon>0$ and let $\epsilon\to0$. Then we can split the integral in \eqref{eq:FunctEqRuelleUpToConstant} into integrals over $(-\frac{1}{2}+\epsilon,0)$, $(0,\frac{1}{2}-\epsilon)$ and $(\frac{1}{2}-\epsilon,\frac{1}{2}+\epsilon)$. The integral over $(\frac{1}{2}-\epsilon,\frac{1}{2}+\epsilon)$ can be computed along a half circle in the upper half plane, giving the asymptotics
		$$ \int_{\frac{1}{2}-\epsilon}^{\frac{1}{2}+\epsilon}r\tan\pi r\,dr = -\pi i\cdot\Res_{r=\frac{1}{2}}r\tan\pi r+\mathcal{O}(\epsilon) = i+\mathcal{O}(\epsilon). $$
		The integrals over $(-\frac{1}{2}+\epsilon,0)$ and $(0,\frac{1}{2}-\epsilon)$ are equal, and we define
		$$ A(\epsilon) = -2\pi\int_0^{\frac{1}{2}-\epsilon}r\tan\pi r\,dr = -2\pi\int_0^{-\frac{1}{2}+\epsilon}r\tan\pi r\,dr. $$
		Then		
		\begin{align}\label{ruellezero}
			A(\epsilon)\notag&=-2\pi\int_{0}^{\frac{1}{2}-\epsilon} r \frac{\sin(2\pi r)}{1+\cos(2\pi r)}\,dr\\
			&= \left[r \log(1+\cos(2\pi r))\right]_{r=0}^{r=\frac{1}{2}-\epsilon}-\int_{0}^{\frac{1}{2}-\epsilon}\log(1+\cos(2\pi r))\,dr.
		\end{align}
		By \cite[p.153]{fried1986fuchsian}, the first term in \eqref{ruellezero}
		is given by 
		\begin{align*}
			\left[r \log(1+\cos(2\pi r))\right]_{r=0}^{r=\frac{1}{2}-\epsilon}
			&=\left(\frac{1}{2}-\epsilon\right)\log\left(1+\cos\left(2\pi\left(\frac{1}{2}-\epsilon\right)\right)\right)\\
			&=\left(\frac{1}{2}-\epsilon\right) \log\left(2\pi^{2} \epsilon^{2}\right)(1+O(\epsilon)\big).
		\end{align*}
		Moreover, by \cite[p.153]{fried1986fuchsian}, the latter integral in \eqref{ruellezero} is of 
		order $\frac{1}{2}\log \frac{1}{2}+O(\epsilon)$.
		Hence, as $\epsilon\rightarrow 0$
		\begin{equation}\label{asy}
			\exp(A(\epsilon))\sim 2\pi \epsilon.
		\end{equation}
		In total, we find that
		$$ \exp\left[-2\pi\int_{s-\frac{1}{2}}^{s+\frac{1}{2}}r\tan\pi r\,dr\right] = \exp\left[2A(\epsilon)-2\pi i+\mathcal{O}(\epsilon)\right] \sim (2\pi\epsilon)^2. $$
		This implies that $c=4$ and the proof is complete.
	\end{proof}
	
	The functional equation for $R(s;\chi)$ immediately gives the order of vanishing of $R(s;\chi)$ at $s=0$:
	
	\begin{coro}\label{cor:RuelleAtZero}
		The behavior of the twisted Ruelle zeta function $R(s;\chi)$ near $s=0$
		is given by
		\begin{equation}\label{eq:VanishingOrderRuelle}
			R(s;\chi)=\pm(2\pi s)^{\dim(V_{\chi})(2g-2)}+\text{higher order terms}.
		\end{equation}
	\end{coro}
	
	\begin{rmrk}\label{rem:SignInRuelleAtZero}
		The sign in \eqref{eq:VanishingOrderRuelle} can be determined explicitly. For this write
		$$ R(s;\chi) = \frac{Z(s;\chi)}{Z(s+1;\chi)} = \frac{Z(s;\chi)}{\eta(s+1;\chi)Z(-s;\chi)} $$
		and note that by \eqref{asy}:
		$$ \eta(s+1;\chi)^{-1} = \exp\left[-\dim(V_\chi)\Vol(X)\int_0^{s+\frac{1}{2}}r\tan\pi r\,dr\right] \sim (-2\pi s)^{\dim(V_\chi)(2g-2)} $$
		as $s\to0$. Moreover, by Theorem~\ref{selbergmero} we have
		$$ Z(s;\chi) \sim s^{m+\dim(V_\chi)(2g-2)} \qquad \mbox{as }s\to0, $$
		where $m$ denotes the multiplicity of the eigenvalue $\lambda=0$ of $\Delta_\chi^\sharp$. Hence,
		$$ R(s;\chi) \sim (-1)^m(2\pi s)^{\dim(V_\chi)(2g-2)}+\text{higher order terms}. $$
	\end{rmrk}
	
	\addcontentsline{toc}{section}{References}
	

\providecommand{\bysame}{\leavevmode\hbox to3em{\hrulefill}\thinspace}
\providecommand{\MR}{\relax\ifhmode\unskip\space\fi MR }
\providecommand{\MRhref}[2]{%
	\href{http://www.ams.org/mathscinet-getitem?mr=#1}{#2}
}
\providecommand{\href}[2]{#2}

	\contact
	
\end{document}